\newcommand{\Ad}{\mathop{\mathrm{Ad}}\nolimits}
\newcommand{\ad}{\mathop{\mathrm{ad}}\nolimits}
\newcommand{\Jac}{\mathrm{Jac}}
\newcommand{\R}{\mathbb{R}}
\newcommand{\dif}{\mathrm{d}}
\newcommand\Diff{\textit{Diff}}
\def\1{\mathchoice {\rm 1\mskip-4mu l} {\rm 1\mskip-4mu l}
{\rm 1\mskip-4.5mu l} {\rm 1\mskip-5mu l}}
\newif\iftodo
\newcommand{\todo}[1]{\vspace{5 mm}\par \noindent
\marginpar{\textrm{ToDo}} \framebox{\begin{minipage}[c]{0.45
\textwidth} \tt #1 \end{minipage}}\vspace{5 mm}\par}
\newcommand{\todo}[1]{}
\begin{document}
\title*{Diffeomorphic image matching with left-invariant metrics}
\titlerunning{Left-invariant diffeomorphic matching}
\author{Tanya Schmah and Laurent Risser and Fran\c{c}ois-Xavier Vialard\thanks{All authors contributed equally to this work.}}
\institute{Tanya Schmah \at 
Rotman Research Institute, Baycrest, and University of Toronto, \\
\email{tschmah@research.baycrest.org
}
\and Laurent Risser \at CNRS - Institut de
Math\'ematiques de Toulouse, \email{lrisser@math.univ-toulouse.fr}
\and Fran\c{c}ois-Xavier Vialard \at Universit\'e Paris-Dauphine, \email{vialard@ceremade.dauphine.fr}}
%
%
\maketitle

\abstract{
The geometric approach to diffeomorphic image registration known as
\textit{large deformation by diffeomorphic metric mapping} (LDDMM)
is based on a left action of diffeomorphisms on images, 
and a right-invariant metric on a diffeomorphism group,
usually defined using a reproducing kernel.
We explore the use of left-invariant metrics on diffeomorphism groups, based on reproducing kernels
defined in the body coordinates of a source image.
This perspective, which we call Left-LDM, allows us to consider non-isotropic spatially-varying kernels,
which can be interpreted as describing variable deformability of the source image.
We also show a simple relationship between LDDMM and the new approach,
implying that spatially-varying kernels are interpretable in the same way in LDDMM. 
We conclude with a discussion of a class of kernels that enforce a soft mirror-symmetry constraint,
which we validate in numerical experiments on a model of a lesioned brain.
}

\section{Introduction}
The geometric point of view on diffeomorphic image matching 
was pioneered by \cite{DuGrMi1998,Trouve1998}, and has been developed
significantly in the last ten years \cite{Begetal2005,TrouvŽ05metamorphosesthrough,Holm08theeuler,HolmSolitons,CotterClebsch,
MomentumImagesBruveris,Gay-Balmaz11optim}.
In its many practical applications to medical imaging, including computational anatomy 
\cite{Miller09_CFA},
the approach is known as the Large Deformation Diffeomorphic Metric Mapping framework (LDDMM).
A good geometric overview may be found in  \cite{MomentumImagesBruveris}.
 Two key elements of this framework are: a right-invariant Riemannian metric on a group of diffeomorphisms;
and the left action of this group on images $I:\Omega \to \R^d$ defined by 
$\phi \cdot I := I \circ \phi^{-1}$.
Combining these two elements gives an induced Riemannian metric on the group orbit
of a given image $I$. 
 
In image registration in general, the \textit{inexact matching problem} is, given two images $I$ and $J$,
to find a transformation $\phi$  that minimises the sum of some measure of the size of $\phi$
and some measure of image dissimilarity (or error) $E(\phi \cdot I, J)$,
such as  $\| \phi \cdot I - J\|^2_{L^2}$.
In LDDMM, we seek a path of diffeomorphisms $\phi(t)$ starting at $Id$, with the size of the final diffeomorphism $\phi(1)$ given by the length of the path $\phi$ defined
by the right-invariant Riemannian metric associated with some norm $\| . \|_{V}$ on a Hilbert space $V$ of smooth vector fields.
Thus the fundamental optimisation problem in LDDMM is to minimise
\begin{equation} \label{LDDMM}
\mathcal{J}(\phi) = \frac 12 \int_0^1 \! \|v(t)\|_{V}^2 \, \dif t +
E(\phi(1) \cdot I, J),
\end{equation}
for a path $\phi$ with $\phi(0) = Id$,
under the constraint
\begin{equation} \label{spatialvel}
\partial_t \phi(t) = v(t) \circ \phi(t),
\end{equation}
which defines $v(t)$ as the \textit{spatial (Eulerian) velocity} of $\phi(t)$.
Note that all minimisers of this functional are geodesics, since they must minimise the first
term of  \eqref{LDDMM} for a given $\phi(1)$.

The minimisation problem \eqref{LDDMM} is well-posed provided that the norm on $V$ is sufficiently strong in terms of smoothness (see \cite{laurentbook}, Theorem 11.2).
The Hilbert space $V$ is usually defined via its reproducing kernel:
\begin{equation} \label{norm}
\|v\|^2_{K} = \langle p, K \star p\rangle_{L^2}, \text{ where } v= K \star p \,.
\end{equation}
A Gaussian kernel is often chosen for computational convenience, or a mixture of Gaussian kernels as in \cite{RisserMICCAI,Risser11TMI}.

We note that LDDMM is not the only diffeomorphism-based approach to image matching.
There is another family of successful methods, based on exponentiating stationary vector fields 
\cite{Arsigny06, Vercauteren09, Ashburner07}.
However, unlike these methods, LDDMM is able to draw on concepts in geometry
and mechanics such as geodesic distance and momentum, which have been central both to theoretical developments and to recent efficient numerical algorithms \cite{GeodesicShootingVialard,ParticleMesh}. 

Though not required by the theory, in practice the kernel used in diffeomorphic methods
(LDDMM and the other methods cited above)
has always been
chosen to be translationally-invariant and isotropic.
In LDDMM, spatially-varying or non-isotropic (``direction-dependent'')
kernels have no obvious interpretation, because
the norm is defined in Eulerian coordinates, so that
as $t$ varies during the deformation, a fixed point in the
source image moves through space, and conversely, a fixed point in
space will correspond to different points on the source image.
Similarly, the directions in a direction-dependent kernel are defined 
with respect to Eulerian coordinates,
not the coordinates of the moving source image.
Nonetheless, spatially-varying kernels are potentially of great interest
in medical applications, if they can be made to represent spatially-variable (or non-isotropic) 
deformability of tissue. This is indeed already done in \cite{PcwDiffRisser} to model sliding conditions between the lungs and the ribs.
In general it is well-known that a good choice of kernel (the ``regulariser'') is essential for optimising registration performance,
so that taking into account any spatial variability of the tissue deformability in the kernel will improve the registration.


With this motivation, we propose a new registration framework, which will support 
natural interpretations of spatially-varying metrics.
Left-Invariant LDDMM (``Left-LDM'')
is analogous
to LDDMM but based on a \textit{left-}invariant metric, i.e. based on a norm in
the body (Lagrangian) coordinates of the source image.
This means that instead of the norm in \eqref{LDDMM} being applied to the spatial
(Eulerian) velocity defined
by \eqref{spatialvel},
it is applied to the \textit{convective velocity} defined by
\begin{equation}\label{convel}
\partial_t \phi(t) = d\phi(t)\cdot v(t)\, ,
\end{equation}
where $d\phi(t)$ is the spatial derivative of $\phi(t)$.
To emphasize the relationship between the two frameworks, we will refer to LDDMM from now on
as ``Right-LDM'', consistent with the use of the shortened acronym LDM in \cite{Gay-Balmaz11optim}.
The matching problem in Left-LDM is to minimize the same functional as in Right-LDM
\eqref{LDDMM} but under the "new" constraint \eqref{convel}.
Note that the convective velocity of a given $\phi(t)$ is the 
pull-back of the spatial velocity by $\phi_t$, i.e. it is just the spatial
velocity expressed in body (Lagrangian) coordinates.

Subject to some analytical subtleties explored in Section \ref{sect:analysis},
the solutions $\phi(t)$ are left-geodesics in a diffeomorphism group.
The description of left-geodesic flow in terms of the convective velocity 
is an example of a convective representation of a continuum theory.
Convective representations were introduced in \cite{HoMaRa1986} for ideal fluid flow,
and \cite{SiMaKr1988} for elasticity, and the subject has been further developed in 
\cite{GBMaRa2012}.
The relationship between left- and right- geodesic flows on a diffeomorphism group was 
explored earlier in \cite{GaRa11Clebsch}.

In the Left-LDM framework, a spatially-varying or non-isotropic kernel makes sense,
because it is defined in Lagrangian coordinates, so it can
model variable deformability of different parts of the source image.
(The norm is carried along by push-forwards with the moving source image.)
This opens up possibilities for application-specific regularisation,
either hand-tuned or learnt from data.

\section{Analytical setting}\label{sect:analysis}

We consider the convective velocity constraint, 
formula \eqref{convel}, and the conditions on $v(t)$ such that it can be integrated to produce 
the diffeomorphism $\phi(t)$. Such an evolution equation is a partial differential equation that belongs to the class of linear symmetric hyperbolic systems \cite{FischerMarsden}. The usual method for solving such equations consists in using the method of characteristics, which amounts to solve an equation of the type \eqref{spatialvel} on the inverse of the flow. The equation of characteristics, being equivalent to  formula \eqref{spatialvel},
is an ordinary differential equation and can be integrated provided sufficient smoothness assumptions on the \textit{spatial velocity}. For the spatial velocity constraint, a satisfactory answer has been given in
\cite[Theorems 8.7 and 8.14]{laurentbook}:
The flow of a time dependent vector field in $L^2([0,1],V)$ is well-defined if there exists a constant $C>0$ such that for every $v \in V$
\begin{equation}\label{Admissible}
\| v \|_{1,\infty} \leq C \| v \|_V\,,
\end{equation}
where $\| v \|_{1,\infty}$ is the Banach norm in $W^{1,\infty}(\Omega,\R^d)$. Under this hypothesis, the variational problem \eqref{LDDMM} is well-posed and the set $G_R$,
defined by\footnote{In the corresponding definition in \cite{laurentbook}, $v$ need only be absolutely integrable in time.}
\begin{equation}
G_R := \left\{ \phi(1) \, \big| \, \partial_t \phi(t) = v(t)\circ \phi(t) \text{ and } v \in L^2([0,1],V)\,, \phi(0)=Id\, \right\}\,,
\end{equation}
is a group. 
A similar approach in \cite{em70} proves that the flow of $v \in C([0,1],H^s)$ defines an $H^s$ diffeomorphisms for $s> d/2+ 2$. From a variational point of view the former approach is better suited for solving Problem \eqref{LDDMM}. In particular, working with the space $L^2([0,1],V)$ is crucial for proving the existence of a minimizer and therefore we cannot  reduce our work to a smooth setting. This is our main motivation for developing the following analytical study. Let us then define the following set,
\begin{equation}
G_L := \left\{ \phi(1) \, \big| \, \partial_t \phi(t) = d\phi(t)\cdot v(t) \text{ and } v \in L^2([0,1],V)\,, \phi(0)=Id\,  \right\} \,.
\end{equation} 
Integrating equation \eqref{convel} is straightforward in a smooth setting. Indeed, this equation is equivalent to \begin{equation}\label{convelinv2}
\partial_t \phi^{-1}(t) = -v(t) \circ \phi^{-1}(t)\,.
\end{equation}
Unfortunately, working with $L^2([0,1],V)$ vector fields, Equation \eqref{convelinv2} has to be proven true in that context.
An example of this issue is the following: with a fixed regularity, for instance the group $\Diff^s$ of $H^s$ diffeomorphisms, the inversion map is only continuous and not differentiable. This comes from the fact that the inversion map $Inv: \Diff^s \to \Diff^{s}$ presents a loss of regularity when being differentiated:
\begin{equation}
D\;Inv(\phi)(v) = -d\phi^{-1}(v \circ \phi^{-1}) \,.
\end{equation}
The rest of the section will be devoted to show that equation \eqref{convel} can be solved via the method of characteristics. Our strategy consists in proving that Equation \eqref{convelinv2} holds under very weak conditions so that integration of the \textit{convective velocity} equation \eqref{convel} reduces to the integration of Equation \eqref{convelinv2}.

In what follows, we consider $\Omega$ a closed, bounded domain and $V$ a Hilbert space of vector fields $u$ such that both $u$ and $du$ vanish on its boundary, and we suppose that $V$ is embedded in
$C^1(\Omega, \R^d)$, i.e. there exists a constant $C>0$ such that \eqref{Admissible} applies for all $u$.
Let us begin with the following lemma:
\begin{lemma}
Let $B := C^0([0,1],C^1_\infty(\Omega,\R^d)) \cap H^1([0,1],L^2(\Omega,\R^d))$.
Let $\phi \in B$, and denote by $\phi^{-1}$ the map $t \mapsto \phi_t^{-1}$.
If $\phi_t$ is a diffeomorphism onto $\Omega$ for all $t \in [0,1]$, then $\phi^{-1}$ lies in $B$.
\end{lemma}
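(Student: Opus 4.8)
The plan is to verify separately that $\phi^{-1}$ lies in $C^0([0,1],C^1_\infty(\Omega,\R^d))$ and in $H^1([0,1],L^2(\Omega,\R^d))$; everything rests on uniform estimates that I would extract first. Since $[0,1]$ is compact and $t\mapsto\phi_t$ is continuous, $C_1:=\sup_t\|\phi_t\|_{1,\infty}<\infty$. Each $\phi_t$ being a diffeomorphism onto $\Omega$, the map $x\mapsto\det d\phi_t(x)$ never vanishes, and as $(t,x)\mapsto\det d\phi_t(x)$ is continuous on the compact set $[0,1]\times\Omega$ there is $c_0>0$ with $|\det d\phi_t(x)|\geq c_0$ for all $t,x$. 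Cramer's rule applied to $d(\phi_t^{-1})(y)=\big(d\phi_t(\phi_t^{-1}(y))\big)^{-1}$ then gives $C_2:=\sup_t\|d(\phi_t^{-1})\|_\infty<\infty$, and since $\phi_t^{-1}(\Omega)=\Omega$ is bounded, $\phi_t^{-1}$ is bounded in $C^1_\infty(\Omega,\R^d)$ uniformly in $t$ (any boundary constraint built into this space passes to $\phi_t^{-1}$, as $\phi_t(\partial\Omega)=\partial\Omega$). In particular $\phi_t$ and $\phi_t^{-1}$ are uniformly bi-Lipschitz, so the changes of variables $y=\phi_t(x)$ have Jacobian bounded above and below by fixed positive constants.

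\emph{Continuity into $C^1$.} Inserting $y=\phi_s(\phi_s^{-1}(y))$ and using that $\phi_t^{-1}$ is $C_2$-Lipschitz,
\[
|\phi_s^{-1}(y)-\phi_t^{-1}(y)| = \big|\phi_t^{-1}\!\big(\phi_t(\phi_s^{-1}(y))\big)-\phi_t^{-1}\!\big(\phi_s(\phi_s^{-1}(y))\big)\big| \leq C_2\,\|\phi_t-\phi_s\|_\infty ,
\]
which tends to $0$ as $s\to t$; hence $t\mapsto\phi_t^{-1}$ is continuous into $C^0$. Combining this with $d\phi_s\to d\phi_t$ uniformly (from $\phi\in C^0([0,1],C^1)$) and the uniform continuity of $d\phi_t$ on $\Omega$ gives $d\phi_s(\phi_s^{-1}(\cdot))\to d\phi_t(\phi_t^{-1}(\cdot))$ uniformly; as matrix inversion is uniformly continuous on $\{A:\|A\|\leq C_1,\ |\det A|\geq c_0\}$, also $d(\phi_s^{-1})\to d(\phi_t^{-1})$ uniformly. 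Thus $\phi^{-1}\in C^0([0,1],C^1_\infty(\Omega,\R^d))$.

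\emph{Time regularity.} Differentiating $\phi_t\circ\phi_t^{-1}=Id$ formally suggests that the time-derivative of $\phi^{-1}$ is
\[
w_t(y) := -\,d(\phi_t^{-1})(y)\,\big(\partial_t\phi_t\big)\!\big(\phi_t^{-1}(y)\big).
\]
For a.e.\ $t$ this is a bona fide element of $L^2(\Omega)$ (the $L^2$-field $\partial_t\phi_t$ composed with the $C^1$ diffeomorphism $\phi_t^{-1}$, times the bounded field $d(\phi_t^{-1})$), and the change of variables $y=\phi_t(x)$ gives $\|w_t\|_{L^2}\leq C\|\partial_t\phi_t\|_{L^2}$, so $w\in L^2([0,1],L^2)$ because $\phi\in H^1([0,1],L^2)$. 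It remains to see that $w$ is the weak time-derivative of $\phi^{-1}$. First, running the same change-of-variables estimate with the substitution $z=\phi_s^{-1}(y)$ yields $\|\phi_t^{-1}-\phi_s^{-1}\|_{L^2}\leq \tilde C\|\phi_t-\phi_s\|_{L^2}$; since $t\mapsto\phi_t$ is absolutely continuous into $L^2$ (being in $H^1$), so is $t\mapsto\phi_t^{-1}$, and as $L^2$ has the Radon--Nikod\'ym property, $\phi^{-1}$ is differentiable for a.e.\ $t$ with $\phi_t^{-1}=\phi_0^{-1}+\int_0^t\partial_\tau(\phi_\tau^{-1})\,d\tau$. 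To identify the a.e.-derivative with $w$, fix $t$ and use $\phi_{t+h}(x)-\phi_t(x)=\int_t^{t+h}(\partial_\tau\phi)(\tau,x)\,d\tau$ for a.e.\ $x$ (valid since $\phi$ is absolutely continuous into $L^2$), transported to a.e.\ $y$ through the null-set-preserving $\phi_t^{-1}$; together with a first-order expansion of $\phi_{t+h}$ between $\phi_t^{-1}(y)$ and $\phi_{t+h}^{-1}(y)$ this gives
\[
\frac{\phi_{t+h}^{-1}(y)-\phi_t^{-1}(y)}{h} = -\,M_h(y)^{-1}\,\frac{1}{h}\int_t^{t+h}(\partial_\tau\phi)\!\big(\tau,\phi_t^{-1}(y)\big)\,d\tau ,
\]
with $M_h(y)\to d\phi_t(\phi_t^{-1}(y))$ uniformly in $y$ as $h\to0$. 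By the Bochner form of the Lebesgue differentiation theorem for $\tau\mapsto(\partial_\tau\phi)(\tau,\cdot)\in L^1([0,1],L^2)$, pushed through the bi-Lipschitz map $\phi_t^{-1}$, the averaged integral tends to $(\partial_t\phi_t)(\phi_t^{-1}(\cdot))$ in $L^2(dy)$ for a.e.\ $t$; since $M_h(\cdot)^{-1}$ is uniformly bounded and converges uniformly, the right-hand side tends to $w_t$ in $L^2(dy)$. Hence $\partial_t(\phi_t^{-1})=w_t$ for a.e.\ $t$, so $\phi^{-1}\in H^1([0,1],L^2)$; combined with the previous step, $\phi^{-1}\in B$.

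\emph{Main obstacle.} The uniform estimates and the $C^1$-in-time continuity are routine. The genuine difficulty is the identification of $\partial_t(\phi^{-1})$: inversion loses one spatial derivative — exactly the phenomenon shown by the formula for $D\,Inv$ in the text — while $\partial_t\phi$ carries \emph{no} spatial smoothness to spare, so one cannot simply differentiate the closed-form expression for $d(\phi_t^{-1})$ in time. The way around this is to stay at the level of difference quotients and Lebesgue points and transport them through the bi-Lipschitz maps $\phi_t$, carefully controlling the $h\to0$ limit; a convenient alternative to the Radon--Nikod\'ym argument is to test against $\psi\in C_c^\infty(0,1)$ and dominate the difference quotients by the Hardy--Littlewood maximal function of $t\mapsto\|\partial_t\phi_t\|_{L^2}\in L^2([0,1])$. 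A minor point deferred above is the first-order expansion for points near $\partial\Omega$, handled by convexity of $\Omega$ or a short localisation argument.
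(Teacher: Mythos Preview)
Your proof is correct and takes a genuinely different route from the paper's. The paper argues by \emph{approximation}: it first verifies the formula $\partial_t\phi_t^{-1}=-[d\phi_t]_{\phi_t^{-1}}(\partial_t\phi_t\circ\phi_t^{-1})$ for $\phi$ in the smoother class $C:=C^0([0,1],C^1_\infty)\cap C^1([0,1],C^0)$, where the computation is elementary, and then takes a sequence $\phi_n\in C$ converging to $\phi$ in $B$ and passes to the limit by testing against smooth $f$, changing variables, and using weak $L^2$ convergence. You instead work directly with $\phi$ at its given regularity: after the uniform $C^1$ bounds, you show $t\mapsto\phi_t^{-1}$ is absolutely continuous into $L^2$ via the Lipschitz estimate $\|\phi_t^{-1}-\phi_s^{-1}\|_{L^2}\leq\tilde C\|\phi_t-\phi_s\|_{L^2}$, invoke the Radon--Nikod\'ym property of $L^2$ to obtain an a.e.\ derivative, and identify it pointwise in $t$ through a difference-quotient computation together with the Bochner--Lebesgue differentiation theorem. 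Your route avoids the small but real burden of ensuring that the time-mollified approximants $\phi_{n,t}$ remain diffeomorphisms onto $\Omega$ (which the paper leaves implicit), and yields explicit quantitative control; the paper's route is shorter to write down and needs only soft weak-convergence arguments rather than the RNP/Lebesgue-point machinery. Both proofs rest on the same change-of-variables estimates and on the uniform bi-Lipschitz bounds coming from compactness of $[0,1]\times\Omega$.
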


\begin{remark} The subscript $\infty$ denotes the use of the sup norm. 
\end{remark}

\begin{proof}
The standard Inverse Function Theorem implies that $\phi_t^{-1} $ is $C^1$ for all $t \in [0,1]$.
The continuity of $\phi$ implies the continuity of the map $(t,x) \mapsto \phi_t(x)$, 
which by a lesser-known version of the Implicit Function Theorem (see \cite{ImplicitSpringerEOM})
implies the continuity of $t\mapsto \phi_t^{-1}(x)$ for every $x$.
Therefore, by compactness of $\Omega$ we have  $\phi^{-1} \in C^0([0,1],C^1_\infty(\Omega, \R^d)$.
 
Let us first suppose that $\phi \in C:= C^0([0,1],C^1_\infty(\Omega, \R^d) \cap C^1([0,1],C^0(\Omega,\R^d))$, and that (as before) $\phi_t$ is a diffeomorphism onto $\Omega$ for all $t \in [0,1]$.
Then for all $x\in \Omega$ one has by simple differentiation
\begin{equation}
\partial_t \phi^{-1}_t(x) = -[d\phi_t]_{\phi_t^{-1}(x)}(\partial_t \phi_t(\phi_t^{-1}(x)))\,.
\end{equation}
We aim at proving that $\partial_t \phi^{-1}_t$ belongs to $L^2([0,1],L^2(\Omega,\R^d))$: The first term $[d\phi_t]_{\phi_t^{-1}(x)}$ is continuous (on $\Omega$) and its sup norm is uniformly bounded for $t \in [0,1]$ since $C^0_{\infty}([0,1],C^1_\infty(\Omega,\R^d))$.
By assumption, $\partial_t \phi_t \in L^2(\Omega,\R^d)$ and the right composition with a $C^1$ diffeomorphism is a bounded linear operator on $L^2(\Omega,\R^d)$ (by a standard change of variable). It follows easily that $\partial_t \phi^{-1}_t \in L^2([0,1],L^2(\Omega,\R^d))$ and $\phi^{-1} \in C$.

We will prove a similar result for any $\phi \in B$: By density of $C$ in $B$, we consider a sequence $\phi_n \in C$ converging to $\phi \in B$. In particular, we have
\begin{equation}
\phi_{n,T}^{-1}(x) = \int_0^T\! -[d\phi_{n,t}]_{\phi_{n,t}^{-1}(x)}(\partial_t \phi_{n,t}(\phi_{n,t}^{-1}(x)))\, \dif t \,.
\end{equation}
First, the left-hand side strongly converges in $C^1_\infty(\Omega,\R^d)$  (by the inverse function theorem) and thus in $L^2(\Omega,\R^d)$ to $\phi^{-1}_T$.
\\
Second, 
the right-hand side weakly converges in $L^2(\Omega,\R^d)$ to $$\int_0^T \! -[d\phi_{t}]_{\phi_{t}^{-1}(x)}(\partial_t \phi_{t}(\phi_{t}^{-1}(x)))\,\dif t \,.$$ Indeed, let us consider $f \in C^\infty(\Omega,\R^d)$ and calculate the $L^2$ scalar product
\begin{multline}
\langle f, \int_0^T  \! -[d\phi_{n,t}]_{\phi_{n,t}^{-1}(\cdot)}(\partial_t \phi_{n,t}(\phi_{n,t}^{-1}(\cdot)))\,\dif t \rangle = \int_0^T  \! - \langle [d\phi_{n,t}]^*_{\phi_{n,t}^{-1}(\cdot)}(f),\partial_t \phi_{n,t}(\phi_{n,t}^{-1}(\cdot))\rangle\, \dif t  \\
= \int_0^T\! - \langle [d\phi_{n,t}]^{-1*}(f\circ \phi_{n,t}),\partial_t \phi_{n,t}(\cdot) \Jac(\phi_{n,t})\rangle\, \dif t  \,.
\end{multline}
Since $f$ is smooth and $\Omega$ compact, $f$ is uniformly Lipschitz and thus $[d\phi_{n,t}]^{-1*}(f\circ \phi_{n,t})$ converges for the sup norm to $[d\phi_{t}]^{-1*}(f\circ \phi_{t})$. The same convergence holds for $ \Jac(\phi_{n,t})$ by assumption. This proves the weak convergence on smooth functions, which implies the weak convergence in $L^2$ (see \cite{Yosida}).
Strong and weak limits are equal so that 
$[d\phi_{t}]_{\phi_{t}^{-1}(\cdot)}(\partial_t \phi_{t}(\phi_{t}^{-1}(\cdot))) \in L^2([0,1],L^2(\Omega,\R^d))$ is the (time) derivative of $\phi_{t}^{-1}$
and the conclusion ensues.
\end{proof}

\begin{remark}
In fact, we could have proven the following stronger result: the inversion map is continuous on an affine subspace $\tilde{B}$ of $B$ defined by $\tilde{B} = \{ \phi \in B \, | \, \phi_t \in \Diff \,\}$ endowed with the Banach norm $\sup (\| \phi \|_{H^1}, \| \phi \|_{\infty}, \| \phi^{-1} \|_{\infty})$. However, the proof would be a little more involved and the result is not needed in what follows.
\end{remark}

\begin{proposition}\label{AnalyticalProposition}
Solutions in $B$ of \eqref{convel} exist, are unique and are characterized by being solutions of \begin{equation}\label{convelinv2bis}
\partial_t \phi^{-1}(t) = -v(t) \circ \phi^{-1}(t)\,.
\end{equation}
\end{proposition}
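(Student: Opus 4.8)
The plan is to reduce the convective equation \eqref{convel} to the classical flow equation \eqref{convelinv2bis}, which is of the ``spatial velocity'' type already covered by the admissibility hypothesis \eqref{Admissible} and the flow theory of \cite{laurentbook}. For existence we solve \eqref{convelinv2bis} first and invert the resulting flow, using the preceding Lemma to control the inverse in $B$; for uniqueness, which is precisely the ``method of characteristics'', we show that any $\phi\in B$ solving \eqref{convel} with $\phi_0=Id$ is automatically a path of diffeomorphisms whose inverse solves \eqref{convelinv2bis}, and then appeal to uniqueness for the latter.

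\textbf{Existence.} Given $v\in L^2([0,1],V)$, write \eqref{convelinv2bis} for $\psi_t:=\phi^{-1}(t)$: it reads $\partial_t\psi_t=(-v(t))\circ\psi_t$, $\psi_0=Id$, i.e. it is the flow equation \eqref{spatialvel} for the admissible field $-v\in L^2([0,1],V)$. By \eqref{Admissible} and \cite[Theorems 8.7 and 8.14]{laurentbook} it has a unique solution; since $u$ and $du$ vanish on $\partial\Omega$ for $u\in V$, the flow preserves $\Omega$ and each $\psi_t$ is a diffeomorphism of $\Omega$ onto itself. One then checks $\psi\in B$: continuity of $t\mapsto\psi_t$ in $C^1_\infty(\Omega,\R^d)$ is classical (the matrix field $d\psi_t$ solves a linear ODE with coefficients bounded in space and square-integrable in time), while $\psi\in H^1([0,1],L^2)$ because
\begin{equation*}
\|\partial_t\psi_t\|_{L^2(\Omega)}=\|v(t)\circ\psi_t\|_{L^2(\Omega)}\le |\Omega|^{1/2}\,\|v(t)\|_\infty\le C\,|\Omega|^{1/2}\,\|v(t)\|_V\,,
\end{equation*}
which is square-integrable in $t$. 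Applying the Lemma to $\psi$, the map $\phi$ with $\phi_t:=\psi_t^{-1}$ lies in $B$, and by the derivative formula from that proof $\partial_t\phi_t(x)=-[d\phi_t]_x\,(\partial_t\psi_t)(\phi_t(x))$; since $(\partial_t\psi_t)(\phi_t(x))=-v(t)(\psi_t(\phi_t(x)))=-v(t)(x)$, this becomes $\partial_t\phi_t(x)=[d\phi_t]_x\,v(t)(x)$, which together with $\phi_0=Id$ is exactly \eqref{convel}; and by construction $\phi^{-1}=\psi$ solves \eqref{convelinv2bis}.

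\textbf{Uniqueness and characterization.} Let $\phi\in B$ be any solution of \eqref{convel} with $\phi_0=Id$, and let $\psi$ be the solution of \eqref{convelinv2bis} just constructed. Componentwise, \eqref{convel} is the linear transport equation $\partial_t\phi_t=(v(t)\cdot\nabla)\phi_t$, whose characteristics are the curves $t\mapsto\psi_t(x)$, so $t\mapsto\phi_t(\psi_t(x))$ should be constant. Making this rigorous for $\phi,\psi$ merely in $B$ is carried out exactly as in the proof of the Lemma --- by density of $C$ in $B$ together with a strong/weak $L^2$ limit argument --- and yields, for a.e. $t$ and all $x$,
\begin{equation*}
\partial_t\bigl(\phi_t\circ\psi_t\bigr)(x)=[d\phi_t]_{\psi_t(x)}\,v(t)(\psi_t(x))+[d\phi_t]_{\psi_t(x)}\bigl(-v(t)(\psi_t(x))\bigr)=0\,,
\end{equation*}
the first equality being the chain rule combined with \eqref{convel} evaluated at $\psi_t(x)$ and with \eqref{convelinv2bis}. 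Since $\phi_0\circ\psi_0=Id$ we get $\phi_t\circ\psi_t=Id$ for all $t$; composing on the right with $\psi_t^{-1}$ shows $\phi_t=\psi_t^{-1}$, so each $\phi_t$ is a diffeomorphism of $\Omega$ and $\phi^{-1}=\psi$ solves \eqref{convelinv2bis}. Conversely, by the existence argument $\psi^{-1}$ solves \eqref{convel}; hence the solutions of \eqref{convel} in $B$ are exactly the inverses of the solutions of \eqref{convelinv2bis} in $B$, and uniqueness follows from uniqueness of the classical flow $\psi$ under \eqref{Admissible}. (Alternatively, uniqueness is immediate from a Gr\"onwall estimate on the difference $\eta$ of two solutions of the linear equation \eqref{convel}: $\tfrac{d}{dt}\|\eta_t\|_{L^2}^2=-\int_\Omega(\div v(t))\,|\eta_t|^2\,\dif x\le\|\div v(t)\|_\infty\,\|\eta_t\|_{L^2}^2$, using that $v(t)$ vanishes on $\partial\Omega$, so $\eta_0=0$ forces $\eta\equiv0$.)

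\textbf{Main obstacle.} The one genuinely delicate point is the differentiation of $t\mapsto\phi_t\circ\psi_t$ when $\phi$ and $\psi$ are only of class $C^1$ in space and $H^1$ in time with values in $L^2$: the chain rule is not available off the shelf and must be recovered by the approximation scheme of the Lemma (approximate in $C$, differentiate there, and pass to the limit matching the strong $L^2$ limit of one side against the weak $L^2$ limit of the other). A secondary subtlety is that invertibility of $\phi_t$ is not part of the notion of a ``solution in $B$'' but must be produced, which is exactly what the identity $\phi_t\circ\psi_t=Id$ accomplishes. Everything else --- that $\psi\in B$, that the flow preserves $\Omega$, and the identification of the limits --- is routine given the embedding $V\hookrightarrow C^1_\infty(\Omega,\R^d)$ and the boundary conditions on $V$.
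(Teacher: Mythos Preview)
Your proof is correct and arrives at the same characterization, but the organization differs from the paper's. The paper argues by continuation: starting from an arbitrary solution $\phi\in B$ with $\phi_0=Id$, it observes that $\phi_t$ is a diffeomorphism for $t$ in some maximal relatively open set $I\subset[0,1]$; on that set the Lemma applies directly to $\phi$, yielding \eqref{convelinv2bis} for $\phi^{-1}$, and flow theory then forces $\phi^{-1}$ to agree with the global flow of $-v$, which shows $I$ is also closed and hence all of $[0,1]$. Your route avoids the open--closed argument altogether: you build the solution by first integrating the flow $\psi$ of $-v$, checking $\psi\in B$, and inverting via the Lemma; and for uniqueness you either differentiate $\phi_t\circ\psi_t$ (justified by the same approximation device as in the Lemma) or, more simply, run an $L^2$ energy estimate on the linear transport equation and apply Gr\"onwall. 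The paper's argument is shorter and stays closer to the Lemma as stated, while yours is more constructive on the existence side and, through the Gr\"onwall alternative, gives a uniqueness proof that never needs to know in advance that $\phi_t$ is invertible. Both approaches rest on the same two pillars (the Lemma and \cite[Theorem~8.7]{laurentbook}), so neither is more general; they simply allocate the work differently.
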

\begin{proof}
The initial condition is $\phi_0 = Id$ together with the assumption $\phi \in B$ imply the existence of a positive real number $T>0$ such that $\phi_t$ is a diffeomorphism for $t \in [0,T]$. On this interval, the previous lemma gives that $\phi^{-1} \in B$ and $\partial_t \phi_t = -[d\phi_t]_{\phi_t^{-1}(\cdot)}(\partial_t \phi_t(\phi_t^{-1}(\cdot)))$. Since $\partial_t \phi_t = d\phi(t)\cdot v(t)$, we obtain $\partial_t \phi_t^{-1} = -v(t) \circ \phi^{-1}(t)$. Using  the result \cite[Theorem 8.7]{laurentbook} on flow integration, we obtain the existence and uniqueness of $\phi^{-1} \in B$ satisfying \eqref{convelinv2bis}. This implies also existence and uniqueness of solutions in $B$ of \eqref{convel} on $[0,T]$. 
The extension for all time $t \in [0,1]$ is straightforward by considering $I = \sup \{ T>0 \, | \, \forall t < T \, ,\phi_t \in \Diff \}$.  By construction, $I$ is open and the argument above shows that $I$ is non-empty. Last, $I$ is closed since the flow of $-v(t)$ is a diffeomorphism for all time $t\in [0,1]$ and therefore $I = [0,1]$. 
\end{proof}

\begin{remark}
The definition of the space $B$ could have been a little more general using $W^{1,1}(\Omega,\R^d)$ instead of $H^1(\Omega,\R^d)$.
However, it was not necessary regarding the existence of minimizers of functional \eqref{LDDMM} under \textit{convective velocity} constraint.
\end{remark}

In light of this result, we modify the definitions of $G_L$ and $G_R$ to require that $\phi\in B$:
\begin{align}
G_L &:= \left\{ \phi(1) \in B \, \big| \, \partial_t \phi(t) = d \phi(t) \cdot v(t) \text{ and } v \in L^2([0,1],V) \right\}, \notag\\
G_R &= \left\{ \phi(1) \in B \, \big| \, \partial_t \phi(t) = u(t) \circ \phi^{-1}(t) \text{ and } u \in L^2([0,1],V) \right\}.\notag
\end{align}
Since $G_R$ is closed under inversion, Proposition \ref{AnalyticalProposition} implies $G_L = G_R$.
 Note that the sets of paths $\phi(t)$ in the definitions of $G_L$ and $G_R$ do not coincide in general. 
Indeed, these sets of paths correspond to each other by the inverse map, 
and this inversion shows a loss of regularity for instance on $\Diff^s$.
In the rest of the paper, we will use the notation $G_V$
to denote the group $G_L=G_R$, and by abuse of notation, $G_L$ and $G_R$ will denote the set of paths generated under the constraint \eqref{convel} (and respectively \eqref{spatialvel}) by elements of $L^2([0,1],V)$.  

The structure of $G_V$ is not well-known. 
In the case of Gaussian kernels, $G_V$ is probably included in an ILH-Lie group in the sense of Omori
\cite{MR0431262}. 
In general, it is not known whether $G_V$ admits a differentiable structure.
Nonetheless, the group carries natural left- and right- invariant metrics,
as defined in the next section, and isometries should be understood as being between metric spaces.
In the case of Sobolev spaces, the right-invariant metric is a smooth Riemannian metric, 
whereas the left-invariant metric is probably not.Ê 

\smallskip

Finally, we can now benefit from the existence of minimizers for the functional \eqref{LDDMM} in the LDDMM framework:

\begin{theorem}
If $V$ satisfies assumption \eqref{Admissible} and $E$ is continuous w.r.t. uniform convergence of $\phi$ on every compact set in $\Omega$, then there exists a minimizer in $G_V$ of the functional \eqref{LDDMM} under the \textit{convective velocity} constraint \eqref{convel}.
\end{theorem}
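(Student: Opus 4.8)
The plan is to use the direct method in the calculus of variations, transferring the problem from the Left-LDM side to the Right-LDM (LDDMM) side via the identification $G_L = G_R$ established in Proposition \ref{AnalyticalProposition}. First I would take a minimizing sequence $\phi_n \in G_L$ with convective velocities $v_n \in L^2([0,1],V)$, so that $\mathcal{J}(\phi_n)$ converges to the infimum; in particular $\frac12\int_0^1 \|v_n(t)\|_V^2\,\dif t$ is bounded. Since $L^2([0,1],V)$ is a Hilbert space, up to a subsequence $v_n \deb v_\infty$ weakly in $L^2([0,1],V)$, and by weak lower semicontinuity of the norm, $\int_0^1 \|v_\infty(t)\|_V^2\,\dif t \leq \liminf \int_0^1 \|v_n(t)\|_V^2\,\dif t$.

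Next I would pass to the inverse paths $\psi_n := \phi_n^{-1}$, which by Proposition \ref{AnalyticalProposition} satisfy the LDDMM-type equation $\partial_t \psi_n = -v_n \circ \psi_n$. Here I would invoke the stability/continuity results behind \cite[Theorems 8.7 and 8.14]{laurentbook}: the flow map $v \mapsto \psi$ from $L^2([0,1],V)$ to $C^0([0,1],\Diff)$ is continuous with respect to weak convergence of $v_n$ in the sense that $\psi_n(t) \to \psi_\infty(t)$ uniformly on compact subsets of $\Omega$, uniformly in $t$, where $\psi_\infty$ is the flow of $-v_\infty$; the admissibility condition \eqref{Admissible} is exactly what guarantees this. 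Consequently $\phi_n(1) = \psi_n(1)^{-1} \to \psi_\infty(1)^{-1} =: \phi_\infty(1)$ uniformly on compacts, and $\phi_\infty \in G_R = G_L = G_V$. By the hypothesis that $E$ is continuous with respect to uniform convergence on compacts, $E(\phi_n(1)\cdot I, J) \to E(\phi_\infty(1)\cdot I, J)$. Combining with the lower semicontinuity of the regularization term gives $\mathcal{J}(\phi_\infty) \leq \liminf \mathcal{J}(\phi_n) = \inf \mathcal{J}$, so $\phi_\infty$ is a minimizer.

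The main obstacle, and the step requiring the most care, is justifying the weak-convergence stability of the flow: one must ensure that weak convergence $v_n \deb v_\infty$ in $L^2([0,1],V)$ (not strong convergence) still yields convergence of the flows. This is the classical argument underlying the existence theory in \cite{laurentbook}: one shows the flows $\psi_n$ are equi-bounded and equi-continuous (via Gronwall estimates using $\|v_n\|_{1,\infty} \leq C\|v_n\|_V$ and the $L^2$-in-time bound), extracts a uniformly convergent subsequence by Arzel\`a--Ascoli, and identifies the limit as the flow of $-v_\infty$ by passing to the limit in the integral equation $\psi_n(t,x) = x - \int_0^t v_n(s,\psi_n(s,x))\,\dif s$, where the weak convergence of $v_n$ against the (strongly convergent) test functions built from $\psi_n$ suffices. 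A secondary point to check is that the limiting path $\phi_\infty$ genuinely lies in $B$ and that its convective velocity is $v_\infty$, which follows from Proposition \ref{AnalyticalProposition} applied to $\psi_\infty$ together with the regularity $V \hookrightarrow C^1(\Omega,\R^d)$.
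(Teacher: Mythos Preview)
Your proposal is correct and follows essentially the same route as the paper: reduce the Left-LDM minimization to a Right-LDM problem via Proposition~\ref{AnalyticalProposition} (the equivalence $\partial_t\phi = d\phi\cdot v \Leftrightarrow \partial_t\phi^{-1} = -v\circ\phi^{-1}$), and then invoke the standard LDDMM existence machinery. The paper's own proof is a one-line citation of \cite[Theorem~11.2]{laurentbook}; you have simply unpacked that citation by sketching the direct-method argument (weak compactness in $L^2([0,1],V)$, weak lower semicontinuity of the energy, and weak-to-uniform continuity of the flow map via Arzel\`a--Ascoli and Gronwall) that constitutes its proof.
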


\begin{proof}
This follows from \cite[Theorem 11.2]{laurentbook}.
\end{proof}

Note that the theorem applies for the usual sum of squared differences similarity measure:
$$E(\phi)= \| I \circ \phi(1)^{-1}- J \|_{L^2}^2\,.$$

\section{Left- and right- invariant metrics on diffeomorphism groups}\label{GroupAndCorrespondence}

Proposition \eqref{AnalyticalProposition} proved that the convective velocity constraint \eqref{convel}
is equivalent to
\begin{equation}\label{convelinv3}
\partial_t \phi^{-1}(t) = -v(t) \circ \phi^{-1}(t)\,,
\end{equation}
in a general setting.This equation is simply the spatial velocity constraint \eqref{spatialvel} for $\phi^{-1}$, except with a minus sign.
In other words, if the spatial and convective velocities of any path $\phi(t)$ are denoted 
by $v_R^\phi$ and $v_L^\phi$,
respectively, then
\begin{equation}\label{E:vLR}
v_R^{\phi^{-1}} = -v_L^\phi.
\end{equation}
As a consequence of this simple fact (well-known in a smooth setting), there are close relationships between Left-LDM and Right-LDM.

\smallskip

On $G_V$ a left-invariant metric $d_L$ can be defined by
\begin{equation}\label{LeftDistance}
d_{L}(\phi,Id) = \inf \{ \sqrt{\int_0^1 \| v^\phi_L(t)\|^2_V \,dt} \, : \, \phi(0) = Id \text{ and } \phi(1) = \phi \}.
\end{equation}
A right-invariant metric $d_R$ can be defined in the same way but using the spatial velocity $v^\phi_R$
instead of the convective velocity $v^\phi_L$.
It follows from \eqref{E:vLR} that
\begin{equation}\label{E:dLR}
d_{L}(\phi,Id) = d_{R}(\phi^{-1},Id).
\end{equation}
 As shown in  \cite{Trouve1998},  the distance $d_R$ is well-defined and makes $G_V$ a complete metric space. From \eqref{E:dLR}, it follows that the same is true of $d_L$.
Between any two diffeomorphisms in $G_V$, there exists a path minimising the distance $d_L$ (\textit{resp}. $d_R$), and such minimising paths will be called left- (\textit{resp.} right-) geodesics.
Note that we have defined geodesics without reference to a Riemannian metric, 
since we do not know whether $G_V$ even has a smooth structure, as discussed earlier.

The following proposition summarises some elementary properties of these distance metrics,
all straightforward consequences of \eqref{E:vLR} and the definitions.

\begin{proposition} \label{InverseMap}
\begin{enumerate}
\item The inverse mapping is an isometry:
\begin{align*}
(G_V,d_L) &\to (G_V,d_R) \\
\phi &\to \phi^{-1}\,
\end{align*}
\item $\phi$ is a left-geodesic if and only if $\phi^{-1}$ is a right-geodesic.
\item Left translation is an isometry of $(G_V, d_L)$, and right translation is an isometry of 
$(G_V, d_L)$.
\item The left translation of a left-geodesic is a left-geodesic (and similarly for right-geodesics).
\end{enumerate}
\end{proposition}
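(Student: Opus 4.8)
The plan is to derive all four statements mechanically from the single identity $v_R^{\phi^{-1}} = -v_L^\phi$ established in \eqref{E:vLR} (itself a consequence of Proposition \ref{AnalyticalProposition}), together with the definitions \eqref{LeftDistance} of $d_L$ and the analogous definition of $d_R$. First I would record the immediate consequence \eqref{E:dLR}, namely $d_L(\phi,Id)=d_R(\phi^{-1},Id)$: given any path from $Id$ to $\phi$ with convective velocity $v_L^\phi$, the reversed-inverse path $t\mapsto \phi(t)^{-1}$ runs from $Id$ to $\phi^{-1}$ and, by \eqref{E:vLR}, has spatial velocity $-v_L^\phi(t)$, hence the same energy $\int_0^1\|v_L^\phi(t)\|_V^2\,dt$ (the norm on $V$ is even); taking infima over both families of paths and using that inversion is an involution gives equality.

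For item 1, I would upgrade \eqref{E:dLR} to bi-invariance of the metrics under the stated identification. The map $\phi\mapsto\phi^{-1}$ is a bijection $G_V\to G_V$, and \eqref{E:dLR} applied after left-translating (see item 3 below) gives $d_L(\phi,\psi)=d_R(\phi^{-1},\psi^{-1})$ for all $\phi,\psi$; since inversion is its own inverse, this is exactly the statement that inversion is an isometry $(G_V,d_L)\to(G_V,d_R)$. Item 2 then follows because a path realizes the infimum in \eqref{LeftDistance} for $d_L$ if and only if its pointwise inverse realizes the infimum for $d_R$ between the inverse endpoints, by the energy-preserving correspondence just described; so $\phi(\cdot)$ is a left-geodesic iff $\phi(\cdot)^{-1}$ is a right-geodesic.

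For item 3, the key computation is how the convective velocity transforms under translations. Under left translation $\phi\mapsto\psi\circ\phi$ (with $\psi$ fixed), one has $\partial_t(\psi\circ\phi)=d\psi\cdot\partial_t\phi=d\psi\cdot(d\phi\cdot v_L^\phi)=d(\psi\circ\phi)\cdot v_L^\phi$, so the convective velocity is unchanged; hence $d_L(\psi\circ\phi_1,\psi\circ\phi_0)=d_L(\phi_1,\phi_0)$. Under right translation $\phi\mapsto\phi\circ\psi$, the path $t\mapsto\phi(t)\circ\psi$ has $\partial_t(\phi\circ\psi)=(\partial_t\phi)\circ\psi=(d\phi\cdot v_L^\phi)\circ\psi$; writing the chain rule $d(\phi\circ\psi)=(d\phi\circ\psi)\cdot d\psi$ one gets convective velocity $(d\psi)^{-1}(v_L^\phi\circ\psi)$, which is precisely the spatial velocity of $\psi$ written backwards — i.e.\ the relation is cleaner via \eqref{E:vLR} and the known right-invariance of $d_R$: using item 1, $d_L(\phi_1\circ\psi,\phi_0\circ\psi)=d_R((\phi_1\circ\psi)^{-1},(\phi_0\circ\psi)^{-1})=d_R(\psi^{-1}\circ\phi_1^{-1},\psi^{-1}\circ\phi_0^{-1})$, and $d_R$ is left-invariant (dual to the computation above), so this equals $d_R(\phi_1^{-1},\phi_0^{-1})=d_L(\phi_1,\phi_0)$. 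Finally item 4 is immediate from item 3: translating a path multiplies every sub-path length by the isometry factor $1$, so a minimizing path is carried to a minimizing path, and the endpoints transform correspondingly.

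The only genuine subtlety — not a deep obstacle but the one point deserving care — is justifying the translation computations at the level of the space $B$ rather than in a smooth category: one must check that left/right composition with a fixed element of $G_V\subset B$ preserves membership in $B$ and commutes with $\partial_t$ in the $L^2$ sense, which is exactly the content of the Lemma and Proposition \ref{AnalyticalProposition} (right composition with a $C^1$ diffeomorphism is bounded on $L^2$, and $\partial_t$ passes through). Everything else is the bookkeeping of the involution $\phi\mapsto\phi^{-1}$ and the evenness of $\|\cdot\|_V$, so I would keep the proof to the remark that all four claims are ``straightforward consequences of \eqref{E:vLR} and the definitions,'' spelling out only the left-translation invariance of $v_L$ and deducing the rest by the inversion isometry.
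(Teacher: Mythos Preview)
Your overall approach coincides with the paper's: the proposition is stated there without proof, accompanied only by the remark that all four items are ``straightforward consequences of \eqref{E:vLR} and the definitions,'' and your plan to derive everything from \eqref{E:vLR} together with the direct computation that left translation leaves the convective velocity unchanged is exactly what is intended. Your arguments for items 1, 2, 4 and the first half of item 3 are correct.

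There is, however, a genuine error in your treatment of the second half of item 3. You attempt to show that right translation is an isometry of $(G_V,d_L)$ by writing
\[
d_L(\phi_1\circ\psi,\phi_0\circ\psi)=d_R(\psi^{-1}\circ\phi_1^{-1},\psi^{-1}\circ\phi_0^{-1})
\]
and then invoking ``$d_R$ is left-invariant (dual to the computation above).'' But the dual computation shows exactly the opposite: under right translation $\phi\mapsto\phi\circ\psi$ the \emph{spatial} velocity is unchanged (so $d_R$ is right-invariant), whereas under left translation the spatial velocity becomes $\psi_*v_R^\phi$, which does not have the same $V$-norm in general. Your chain of equalities therefore breaks. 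In fact the claim as literally printed --- that right translation is an isometry of $(G_V,d_L)$ --- is false for a generic kernel, since it would force $d_L$ to be bi-invariant. The second occurrence of $d_L$ in item 3 is almost certainly a typo for $d_R$, for which the proof is the one-line observation that right translation leaves the spatial velocity invariant; you should flag the typo rather than manufacture an argument for the false statement.
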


\begin{remark} In the context of fluid dynamics,
$\phi$ is the usual Lagrangian map, and $\phi^{-1}$ is the ``back-to-labels'' map.
Observation (2) in the above proposition has been exploited before in this context \cite{GaRa11Clebsch}.
\end{remark}

We now show two correspondences between Left- and Right- LDM.

\begin{lemma}
Let $\phi(t)$ be a path of diffeomorphisms with spatial velocity $v^\phi_R(t)$, defined by $\eqref{spatialvel}$.
Define $\psi: t \to \phi(1) \phi^{-1}(1-t)$, and let $v^\psi_L(t)$ be its convective velocity, defined by 
$\eqref{convel}$. Then $v^\psi_L(t) = v^\phi_R(1-t)$.
\end{lemma}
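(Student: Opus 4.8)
The plan is to compute the convective velocity of the path $\psi(t) = \phi(1)\phi^{-1}(1-t)$ directly from the definition \eqref{convel}, namely $\partial_t \psi(t) = d\psi(t)\cdot v^\psi_L(t)$, and then read off $v^\psi_L$. First I would differentiate $\psi$ in $t$. Writing $\chi(s) := \phi^{-1}(s)$ for brevity, we have $\psi(t) = \phi(1)\circ \chi(1-t)$, so by the chain rule $\partial_t \psi(t) = -\,[d\phi(1)]_{\chi(1-t)}\bigl(\partial_s \chi(1-t)\bigr)$, where the minus sign comes from the $1-t$. Now I need $\partial_s \chi(s) = \partial_s \phi^{-1}(s)$. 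By Proposition \ref{AnalyticalProposition}, or rather the standard smooth computation underlying it (formula $\partial_t \phi^{-1}_t(x) = -[d\phi_t]_{\phi_t^{-1}(x)}(\partial_t\phi_t(\phi_t^{-1}(x)))$ from the lemma), together with the constraint \eqref{spatialvel} giving $\partial_t\phi(t) = v^\phi_R(t)\circ\phi(t)$, one gets $\partial_s\phi^{-1}(s) = -[d\phi^{-1}(s)]\cdot\bigl(v^\phi_R(s)\bigr)$; equivalently this is just \eqref{convelinv3}/\eqref{E:vLR} which says the spatial velocity of $\phi^{-1}$ is $-v^\phi_R$ composed appropriately.

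Next I substitute. With $s = 1-t$, $\partial_s\chi(s)$ evaluated at the relevant point, we obtain
\begin{equation*}
\partial_t\psi(t) = -[d\phi(1)]_{\chi(1-t)}\Bigl(-[d\phi^{-1}(1-t)]\bigl(v^\phi_R(1-t)\bigr)\Bigr) = [d\phi(1)]_{\chi(1-t)}\circ [d\phi^{-1}(1-t)]\bigl(v^\phi_R(1-t)\bigr).
\end{equation*}
On the other hand $d\psi(t) = [d\phi(1)]_{\chi(1-t)}\circ [d\chi(1-t)] = [d\phi(1)]_{\chi(1-t)}\circ[d\phi^{-1}(1-t)]$, the spatial derivative of the composition. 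Comparing the two displays, $\partial_t\psi(t) = d\psi(t)\cdot v^\phi_R(1-t)$, so by uniqueness of the convective velocity (i.e. invertibility of $d\psi(t)$, which holds since $\psi(t)$ is a diffeomorphism in $B$ by Proposition \ref{AnalyticalProposition}) we conclude $v^\psi_L(t) = v^\phi_R(1-t)$, as claimed.

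The one point requiring a little care — and the main obstacle — is that all of the above is a smooth formal computation, whereas the setting is only $L^2([0,1],V)$ in time and $B$ in space, so the chain rule for $\partial_t(\phi(1)\circ\phi^{-1}(1-t))$ and the formula for $\partial_s\phi^{-1}$ are not \emph{a priori} licit pointwise-in-$t$ identities. To handle this rigorously I would invoke Proposition \ref{AnalyticalProposition} and its proof: the relation \eqref{convelinv3} is established there in exactly the weak/almost-everywhere sense needed, and the lemma preceding it shows $\phi^{-1}\in B$ with the claimed (weak) time derivative. Composition on the left with the \emph{fixed} $C^1$-diffeomorphism $\phi(1)$ is a bounded linear operation preserving $C^0([0,1],C^1_\infty)\cap H^1([0,1],L^2)$, so it commutes with the time derivative; thus the identity $v^\psi_L(t)=v^\phi_R(1-t)$ holds for a.e.\ $t$, which is all that is meant. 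I would also note in passing that this lemma immediately gives $d_L(\phi(1),Id) = d_R(\phi(1),Id)$ via length-reparametrization, complementing \eqref{E:dLR}; but that is a corollary, not part of the proof.
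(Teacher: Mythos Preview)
Your proof is correct and follows essentially the same route as the paper: both compute $\partial_t\phi^{-1}(t) = -d\phi^{-1}(t)\cdot v_R^\phi(t)$ from the spatial velocity constraint, flip the sign via the time reversal $t\mapsto 1-t$, and then observe that left-composition with the fixed diffeomorphism $\phi(1)$ turns this into the convective constraint for $\psi$. You have simply unpacked the chain-rule steps more explicitly and added a paragraph on the $L^2$-in-time regularity issues (appealing to Proposition~\ref{AnalyticalProposition}), which the paper's proof glosses over with ``by direct calculation''.
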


\begin{proof}
From \eqref{spatialvel} we have, by direct calculation:
\begin{equation}\nonumber
\partial_t \phi^{-1}(t) = -d\phi^{-1}(t) \cdot v_R^\phi(t)\,,
\end{equation}
so that 
\begin{equation}\nonumber
\partial_t \phi^{-1}(1-t) = d\phi^{-1}(1-t)  \cdot  v_R^\phi(1-t)\,,
\end{equation}
and therefore,
\begin{equation}\nonumber
\partial_t \psi(t) =d \psi(t)  \cdot  v_R^\phi(1-t)\,.
\end{equation}
Thus $v_R^\phi(1-t)$ satisfies the relation \eqref{convel} that defines $v^\psi_L(t)$.
\end{proof}

The following proposition is a direct consequence of the previous lemma.
It concerns a generalisation of the matching functional \eqref{LDDMM},
in which the squared path length in the first term is replaced by the integral of 
a general Lagrangian $l(v(t))$, 
and the
image dissimilarity term $E(\phi(1)\cdot I, J)$ is replaced by a general real-valued function $H(\phi(1))$.

\begin{proposition}\label{samephi1}
Let $V$ and $G = G_L = G_R$ be as defined above.
Let $H:G \mapsto \R$ and $l:V \mapsto \R$ be smooth maps. Let $v^\phi_R$ and $v_L^\phi$ be the spatial and convective velocities defined
by \eqref{spatialvel} and \eqref{convel}, respectively. 
We define $\mathcal{F}_R$ on the set of paths in $G_R$ such that $\phi(0) = Id_\Omega$ by  
\begin{equation}
\mathcal{F}_R(\phi(t)) = \int_0^1 \! \ell(v_R^\phi(t)) \, \dif t + H(\phi(1)) \,.
\end{equation}
Respectively, $\mathcal{F}_L(\phi) $ is defined on the set of paths in $G_L$ by
\begin{equation}
\mathcal{F}_L(\phi(t)) = \int_0^1 \! \ell(v_L^\phi(t)) \, \dif t + H(\phi(1)) \,.
\end{equation}
Then,
\begin{equation}
 \mathcal{F}_R(\phi(t)) = \mathcal{F}_L(\phi(1) \phi^{-1}(1-t)) \,,
\end{equation}
and as a consequence, the minimizers of $\mathcal{F}_R$ and $\mathcal{F}_L$ are in one to one bijection by the map
$\phi(t) \mapsto \phi(1) \phi^{-1}(1-t)$.
\end{proposition}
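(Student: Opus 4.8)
The plan is to let the preceding lemma do the analytic work and then reduce everything to a change of variables plus a bookkeeping argument about the bijection. Write $\Phi$ for the map $\phi(t)\mapsto\psi(t):=\phi(1)\,\phi^{-1}(1-t)$ appearing in the statement. The first task is to check that $\Phi$ sends an admissible path for $\mathcal F_R$ to an admissible path for $\mathcal F_L$. If $\phi$ is a path in $G_R$ with $\phi(0)=Id$, then $\psi(0)=\phi(1)\,\phi^{-1}(1)=Id$ and $\psi(1)=\phi(1)\,\phi^{-1}(0)=\phi(1)$; moreover $\psi\in B$ because $\phi^{-1}\in B$ by the inversion lemma (the hypothesis that $\phi_t$ is a diffeomorphism holds since paths in $G_R$ consist of diffeomorphisms, cf. the proof of Proposition~\ref{AnalyticalProposition}), the time reversal $t\mapsto 1-t$ preserves both factors defining $B$, and left composition by the fixed $C^1_\infty$ diffeomorphism $\phi(1)$ preserves $B$ (chain rule for the $C^1$-in-space factor, and the bound $\|d\phi(1)\|_\infty$ for the $H^1$-in-time factor). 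Finally, by the preceding lemma the convective velocity of $\psi$ is $v_L^\psi(t)=v_R^\phi(1-t)$, which lies in $L^2([0,1],V)$ because $t\mapsto 1-t$ is an isometry of $L^2([0,1],V)$; hence $\psi$ is a genuine element of the set of paths underlying $\mathcal F_L$ (using $G_L=G_R$).

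Granting this, the functional identity is immediate: $\mathcal F_L(\psi)=\int_0^1\ell(v_L^\psi(t))\,\dif t+H(\psi(1))=\int_0^1\ell(v_R^\phi(1-t))\,\dif t+H(\phi(1))$, and the substitution $s=1-t$ turns the integral into $\int_0^1\ell(v_R^\phi(s))\,\dif s$, so $\mathcal F_L(\Phi(\phi))=\mathcal F_R(\phi)$. Note that smoothness of $\ell$ and $H$ is not used at this point — only that the integrand is integrable — so this step is purely a change of variables.

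For the bijection I would exhibit the inverse explicitly: set $\Psi(\psi)(t):=\psi(1-t)^{-1}\,\psi(1)$. A short group computation gives $\Psi(\psi)(0)=Id$, $\Psi(\psi)(1)=\psi(1)$, and $\Psi\circ\Phi=\mathrm{id}$, $\Phi\circ\Psi=\mathrm{id}$ on the respective path-sets (evaluate the composites at generic $t$ and simplify using $\psi(1-t)=\phi(1)\phi(t)^{-1}$, resp. the analogous identity). That $\Psi$ sends admissible paths for $\mathcal F_L$ to admissible paths for $\mathcal F_R$ follows by the symmetric version of the admissibility check above: $\psi^{-1}\in B$ by the inversion lemma, time reversal preserves $B$, right translation by the fixed $C^1_\infty$ diffeomorphism $\psi(1)$ preserves $B$ and does not change the spatial velocity, and by \eqref{E:vLR} the spatial velocity of $t\mapsto\psi^{-1}(1-t)$ — hence of $\Psi(\psi)$ — is $v_L^\psi(1-t)\in L^2([0,1],V)$. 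Thus $\Phi$ is a bijection between the path-set of $\mathcal F_R$ and that of $\mathcal F_L$; combined with $\mathcal F_R=\mathcal F_L\circ\Phi$ this yields $\inf\mathcal F_R=\inf\mathcal F_L$ and a one-to-one correspondence between their minimizers via $\phi(t)\mapsto\phi(1)\phi^{-1}(1-t)$.

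The only point demanding genuine care — rather than a real obstacle — is the verification that $\Phi$ and $\Psi$ respect the function spaces: membership of the reversed and translated paths in $B$, and membership of the transformed velocities in $L^2([0,1],V)$. Each of these reduces to the inversion lemma together with the elementary facts that time reversal is an isometry of $L^2([0,1],V)$ and that composition with a fixed $C^1_\infty$ diffeomorphism is bounded on $C^1_\infty(\Omega,\R^d)$ and on $L^2(\Omega,\R^d)$. None of it is deep, but it is exactly what makes the assertion ``the minimizers lie in $G_L$, resp.\ $G_R$'' legitimate in the non-smooth $L^2([0,1],V)$ setting.
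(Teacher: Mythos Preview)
Your proof is correct and follows the same approach as the paper: define $\psi(t)=\phi(1)\phi^{-1}(1-t)$, invoke the preceding lemma to identify $v_L^\psi(t)=v_R^\phi(1-t)$, apply the change of variables $t\mapsto 1-t$ in the integral, and note $\psi(0)=Id$, $\psi(1)=\phi(1)$. The paper's proof is terse and leaves the bijection and the admissibility checks (membership in $B$, velocities in $L^2([0,1],V)$) implicit; you make these explicit, supplying the inverse $\Psi(\psi)(t)=\psi(1-t)^{-1}\psi(1)$ and verifying both directions, which is a welcome clarification in the non-smooth setting but not a different argument.
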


\begin{proof}
Let $\psi: t \to \phi(1) \phi^{-1}(1-t)$.
Changing the variable $t \mapsto 1-t$, and then applying the Lemma, we have 
\begin{align*}
\int_0^1 \! \ell(v_R^\phi(t)) \, \dif t = \int_0^1 \!  \ell(v_R^\phi(1-t)) \,\dif t 
= \int_0^1  \! \ell(v_L^\psi(t)) \,\dif t .
\end{align*}
Since $\psi(0) = \phi(0) = Id_\Omega$ and $\psi(1) = \phi(1)$, the result follows.
\end{proof}

\begin{remark}
\begin{enumerate}
\item Generically, changing from right- to left- invariant Lagrangian does not change the endpoint of the optimal path.
\item The correspondence also holds for the boundary value problem, \textit{i.e.} when $\phi(1)$ is fixed. 
\item One can use a time-dependent Lagrangian $\ell(v,t)$ if 
$\ell(v,1-t) = \ell(v,t)$ for all $t \in [0,1]$.
\item If the term $H$ is replaced by a path-dependent term, then the result does not hold any more.
\end{enumerate}
\end{remark}

A direct application of the previous proposition to the case of the kinetic energy defined by $\ell(v) :=  \frac 12 \|v\|_{V}^2$ and $H(\phi) = E(\phi_1 \cdot I, J)$ gives the following corollary. The existence of minimizers for these functionals is guaranteed by \cite{laurentbook}.

\begin{corollary}\label{EquivMatch}
[Equivalence of Optimal Matches in Left- and Right- LDM]
Consider the problem of minimising
\begin{equation} \label{general-matching}
\mathcal{J}(\phi) = \frac 12 \int_0^1 \! \|v(t)\|_{V}^2 \, \dif t + E(\phi_1 \cdot I, J)\,,
\end{equation}
for $\phi_0 = Id_\Omega$, and 
with either constraint
\begin{equation}\label{convel3}
\partial_t \phi_t = d\phi_t \cdot v_t\,  \qquad \textrm{(Left-LDM constraint)}
\end{equation}
or
\begin{equation}\label{spatialvel2}
\partial_t \phi_t = v_t\circ \phi_t\,  \qquad \textrm{(Right-LDM constraint)}.
\end{equation}
Then
\begin{enumerate}
\item
The optimal endpoint $\phi_1$ is the same with either constraint.
\item
If $\phi_t$ minimises $\mathcal{J}$ in Left-LDM, then 
$\psi_t := \phi^{-1}_{1-t}\circ \phi_1$
minimises $\mathcal{J}$ in Right-LDM.
\item
If $\psi_t$ minimises $\mathcal{J}$ in Right-LDM, then 
$\phi_t := \psi_1\circ \psi^{-1}_{1-t}$
minimises $\mathcal{J}$ in Left-LDM.
\end{enumerate}
Optimal paths in Left-LDM are left-geodesics, while optimal paths in Right-LDM are right-geodesics.
\end{corollary}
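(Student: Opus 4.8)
The plan is to reduce Corollary~\ref{EquivMatch} to Proposition~\ref{samephi1} by making the obvious choices of Lagrangian and endpoint functional, then to handle the one bookkeeping discrepancy between the two statements, namely that Proposition~\ref{samephi1} uses the path $t\mapsto \phi(1)\phi^{-1}(1-t)$ (left translation by $\phi(1)$) whereas the corollary is phrased with $t\mapsto \phi^{-1}_{1-t}\circ\phi_1$ (right translation by $\phi_1$). So first I would set $\ell(v):=\tfrac12\|v\|_V^2$ and $H(\phi):=E(\phi\cdot I,J)$; with these choices $\mathcal F_R$ and $\mathcal F_L$ specialise exactly to $\mathcal J$ under the Right-LDM and Left-LDM constraints respectively. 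Both $\ell$ and $H$ are of the required type (the continuity/smoothness of $H$ is exactly the hypothesis on $E$ invoked in the earlier existence theorem), and existence of minimizers on $G_V$ is guaranteed by \cite[Theorem~11.2]{laurentbook} as already noted; this takes care of the fact that the statements below are about genuine minimizers.

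Next I would invoke Proposition~\ref{samephi1} directly to get item~(1): the map $\phi(t)\mapsto\phi(1)\phi^{-1}(1-t)$ is a bijection between minimizers of $\mathcal F_R$ (Right-LDM) and minimizers of $\mathcal F_L$ (Left-LDM) that preserves the endpoint, since $\psi(1)=\phi(1)$. Hence the set of optimal endpoints is identical for the two constraints, which is exactly~(1). For~(2) and~(3) I then need to reconcile the two forms of the correspondence. The point is that the path $\psi_t:=\phi^{-1}_{1-t}\circ\phi_1$ appearing in item~(2) is the \emph{right} translate by $\phi_1$ of the path $t\mapsto\phi^{-1}_{1-t}$, and by Proposition~\ref{InverseMap}(3)--(4) right translation is an isometry of $(G_V,d_R)$ carrying right-geodesics to right-geodesics; more to the point, I should check that it produces the same convective/spatial velocity profile. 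Concretely: if $\phi_t$ solves the Left-LDM constraint \eqref{convel3}, then by Proposition~\ref{AnalyticalProposition} its inverse $\phi^{-1}_t$ solves $\partial_t\phi^{-1}_t=-v_t\circ\phi^{-1}_t$, so $\alpha_t:=\phi^{-1}_{1-t}$ satisfies $\partial_t\alpha_t=v_{1-t}\circ\alpha_t$, i.e. $\alpha_t$ is a Right-LDM path with spatial velocity $t\mapsto v_{1-t}$; post-composing with the fixed diffeomorphism $\phi_1$ does not change the spatial velocity (since $\partial_t(\alpha_t\circ\phi_1)=(\partial_t\alpha_t)\circ\phi_1=(v_{1-t}\circ\alpha_t)\circ\phi_1=v_{1-t}\circ(\alpha_t\circ\phi_1)$), so $\psi_t=\alpha_t\circ\phi_1$ has the same spatial velocity $t\mapsto v_{1-t}$, hence the same value of $\int_0^1\tfrac12\|\cdot\|_V^2\,dt$. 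Since also $\psi_0=Id_\Omega$ (because $\phi^{-1}_1\circ\phi_1=Id$) and $\psi_1=\phi^{-1}_0\circ\phi_1=\phi_1$, we get $\mathcal J(\psi)=\mathcal J(\phi)$ with $\psi$ a Right-LDM path and the same endpoint; combined with the endpoint-preserving bijection from Proposition~\ref{samephi1} and the fact that the minimum values coincide (item~(1)), $\phi$ optimal in Left-LDM forces $\psi$ optimal in Right-LDM. Item~(3) is the same argument run backwards: if $\psi_t$ solves \eqref{spatialvel2}, set $\phi_t:=\psi_1\circ\psi^{-1}_{1-t}$; by Proposition~\ref{AnalyticalProposition} (applied to $\psi$, noting $G_R$ is closed under inversion) one checks $\phi_t$ solves the Left-LDM constraint with convective velocity $t\mapsto u_{1-t}$, has $\phi_0=Id_\Omega$, $\phi_1=\psi_1$, and the same energy, so optimality transfers. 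Finally the last sentence — optimal paths are left- (resp. right-) geodesics — follows as in the remark after \eqref{spatialvel}: a minimizer of $\mathcal J$ must minimise the first term $\int_0^1\tfrac12\|v_t\|_V^2\,dt$ for its own fixed endpoint $\phi_1$, and by Cauchy--Schwarz a path minimising $\int\|v\|_V^2$ with fixed endpoints also realises the infimum defining $d_L$ (resp. $d_R$) in \eqref{LeftDistance}, i.e. it is a left- (resp. right-) geodesic.

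The routine parts are the direct differentiations showing that pre/post-composition with the fixed endpoint diffeomorphism leaves the velocity profile unchanged, and the Cauchy--Schwarz step linking energy-minimality to geodesics; I would state these tersely. The one genuine subtlety — the ``main obstacle'' — is the analytic one already flagged in Section~\ref{sect:analysis}: all of these manipulations (inverting paths, reparametrising by $t\mapsto1-t$, composing with a fixed diffeomorphism) must be legitimate for $L^2([0,1],V)$ velocity fields and $\phi\in B$, not merely in a smooth setting. This is precisely what Proposition~\ref{AnalyticalProposition} buys us: it guarantees $\phi^{-1}\in B$ and the validity of \eqref{convelinv2bis}, so every path I write down lies in $B$ and the equivalence between the two constraints is available in the weak setting. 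So the proof is really: quote Proposition~\ref{samephi1} for the bijection and endpoint statement, quote Proposition~\ref{AnalyticalProposition} to legitimise the inverse-path computations, and do the short explicit velocity computations for (2) and (3), closing with the Cauchy--Schwarz remark for the geodesic claim.
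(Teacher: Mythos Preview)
Your proposal is correct and follows the paper's approach, which is simply to invoke Proposition~\ref{samephi1} with $\ell(v)=\tfrac12\|v\|_V^2$ and $H(\phi)=E(\phi\cdot I,J)$; the paper's proof is literally that one sentence. The left/right-translation ``discrepancy'' you flag is not a genuine obstacle --- item~(3) is the proposition's bijection verbatim and item~(2) is just its inverse --- but your direct velocity computation, the care about the analytic setting via Proposition~\ref{AnalyticalProposition}, and the Cauchy--Schwarz remark for the geodesic claim are all sound and more thorough than the paper's treatment.
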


In summary, the optimal diffeomorphism $\phi_1$ is the same in both approaches, 
but there are two optimal paths from $Id$ to $\phi_1$: one left- and one right- geodesic.
These two paths are illustrated in the following diagram.

\vspace{-6pt}

\begin{align*}
\begin{array}{cccccccc}
& & \phi_{t_1} & \rightarrow &\phi_{t_2} & &\\
& \nearrow & & & & \searrow & \\
Id & & & & & & \phi_1 \\
& \searrow & & & & \nearrow & \\
& & \phi^{-1}_{1-t_1} \circ \phi_1 & \rightarrow &\phi^{-1}_{1-t_2} \circ \phi_1 & &
\end{array}
\end{align*}

\medskip

When left- (resp. right-) geodesics act on a image, the resulting paths in shape space are
left- (resp. right-) geodesics.
An example is given in Figure \ref{fig:twopaths}.

\begin{figure}
\begin{center}
\includegraphics[scale=0.35]{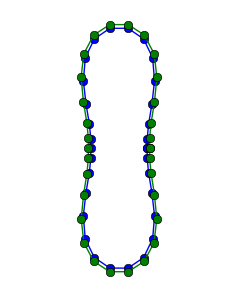}
\includegraphics[scale=0.35]{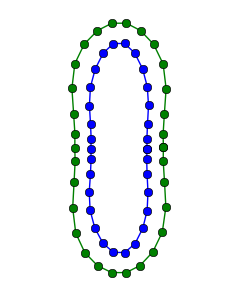}
\includegraphics[scale=0.35]{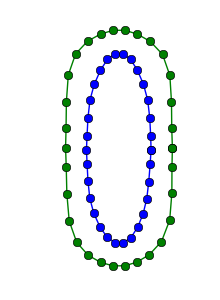}
\includegraphics[scale=0.35]{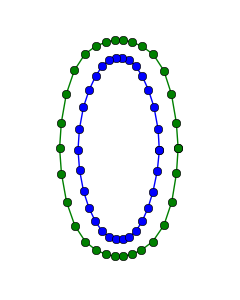}
\includegraphics[scale=0.35]{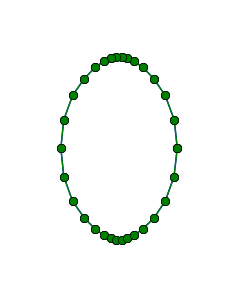}
\end{center}
\caption{
This figure shows snapshots of two deformations from the left-most source image to the right-most target image. The green curves show the optimal Right-LDM path (a right-geodesic), 
while blue curves show the optimal Left-LDM path (a left-geodesic).
Note that the paths are different, though both arrive at an exact match.
The right-metric length of the green geodesic
equals the left-metric length of the blue geodesic.
}
\label{fig:twopaths}
\end{figure}

\section{Geodesic flow of left-invariant metrics}\label{sect:geodesicflow}

We have considered minimisers of \eqref{LDDMM}, which are
geodesics.
We now consider the corresponding initial value problem in which only $\phi(0) = Id$ is fixed.
The minimisers $\phi(t)$ evolve according to Euler-Lagrange equations which are equivalent,
in the Right-LDM case, to the EPDiff equation \cite{HoSchSt09},
\begin{align}\label{EP-right}
\frac{d}{dt} \frac{\partial l}{\partial v} = - \ad^*_{v} \frac{\partial l}{\partial v},
\end{align}
 together with the spatial velocity constraint. 
This formulation leads to the \textit{momentum representation} of diffeomorphisms,
and further to the special \textit{pulson} solutions, which correspond to image landmarks
and have
applications to optimization schemes \cite{GeodesicShootingVialard,ParticleMesh} 
and to the statistical description of images \cite{Miller09_CFA}.
We now discuss the corresponding concepts in Left-LDM.



The first term of \eqref{LDDMM} with fixed endpoints may be expressed as
$\int_0^1 l(v(t)) dt$ where $l$ is the \textit{kinetic energy Lagrangian} defined by
\begin{equation}\label{kinlagrangian}
l(v) := \frac 12 \|v\|_{V}^2,
\end{equation}
and $v(t)$ is the convective velocity of $\phi(t)$, defined in \eqref{convel}.
The minima of this problem,
with given endpoints $\phi(0)$ and $\phi(1)$, 
are left-geodesics, as defined in the previous section. 
There is a question of the well-posedness of the boundary value problem that defines 
these ``left-geodesics''.
However, from the equivalence with Right-LDM shown in Section \ref{GroupAndCorrespondence},
it follows that the problem \textit{is} well-posed for the same norms for which the 
corresponding problem in Left-LDM is well-posed. In addition, the Euler-Poincar\'e equation is available via this equivalence and let us point out that left-reduction is not needed here.

\medskip
\noindent
\textbf{Euler-Poincar\'e equation.} Using the equivalence with Right-LDM, under mild conditions on $H$ in \eqref{general-matching}, left-geodesics minimising \eqref{general-matching} satisfy the \textit{left} Euler-Poincar\'e equation
\cite{HoSchSt09},
\begin{align}\label{EP-left}
\frac{d}{dt} \frac{\partial l}{\partial v} =  \ad^*_{v} \frac{\partial l}{\partial v}\,.
\end{align}
This equation can be expressed in terms of the \textit{convective momentum},
\[
p(t) := \frac{\partial l}{\partial v}\,,
\]
as $\frac{d}{dt} p =  \ad^*_{v} p$. 
In Euclidean coordinates, the Euler-Poincar\'e equation takes the following form,
called \textit{EPDiff-\textit{left}},  
\begin{align}\label{E:EPDiffL}
\frac{\partial \mathbf{p}}{\partial t}  =  \ad^*_{\mathbf{v}} \mathbf{p}
& := \mathbf{v} \cdot \nabla \mathbf{p} +  \left(\nabla \mathbf{v}\right)^T\cdot \mathbf{p}  + \mathbf{p} \left(\mathrm{div} \, \mathbf{v}\right),
\end{align}
where $\left(\nabla \mathbf{v}\right)^T\cdot \mathbf{p} := \sum_j p_j \nabla v^j$.
If the norm is defined in terms of a kernel $K_\sigma$ as in \eqref{norm}, then
$v= K_\sigma \star p$ and
\begin{equation}
l = \frac 12 \int_0^1  \! \langle p(t), K_\sigma \star p(t)\rangle_{L^2} \, \dif t .
\end{equation}

\medskip
\noindent
\textbf{Conservation law.} Given the convective velocity constraint \eqref{convel}, 
the left-invariant Euler-Poincar\'e equation is equivalent to (see \cite{HoSchSt09})
\begin{align}\label{EP-left-cons}
0 = \frac{d}{dt} \Ad_\phi^* \frac{\partial l}{\partial v} = \left(\phi^{-1}\right)^* p.
\end{align}
This is a conservation law, with the conserved quantity being \textit{spatial momentum},
\begin{align*}
m(t) :=  \left(\phi^{-1}\right)^* p(t) =  \left(\phi^{-1}\right)^* \frac{\partial l}{\partial v}\,.
\end{align*}
Note that this reverses the Right-LDM situation, where convective momentum is preserved
and spatial momentum evolves according to EPDiff-right.

\medskip
\noindent
\textbf{Pulsons.} Singular ``pulson'' solutions may be found by making the following ansatz
\cite{FrHo01},
\begin{equation}
\mathbf{p}(t) = \sum_{a=1}^N \mathbf{P}_a(t) \delta\left(\mathbf{x} - \mathbf{Q}_a(t)\right).
\end{equation}
It is known \cite{HoMa04} that this momentum ansatz defines an 
equivariant momentum map 
\begin{equation}
J_{Sing} : T^*Emb(S, R^n)\to \mathcal{X}(R^n )^* 
\end{equation}
called the \textit{singular solution momentum map},
where here $S$ is a finite set of $N$ points indexed by $a$.
It is the momentum map for the cotangent-lift of the left action of $\Diff(R^n)$ on $Emb(S, R^n)$.
It follows from general theory (see e.g. \cite{HoSchSt09}) that $J_{Sing}$
is a Poisson map with respect to the canonical symplectic form on $T^*Emb(S, R^n)$
and the right Lie-Poisson bracket on $\mathcal{X}(R^n )^*$.
Thus the
EPDiff-\textit{right} equations pull back to canonical Hamiltonian equations in $Q$ and $P$,
with respect to Hamiltonian
\[
H = \sum_{a,b=1}^N \left(\mathbf{P}^a(t) \cdot \mathbf{P}^b(t)\right) 
K\left(\mathbf{Q}^a(t), \mathbf{Q}^b(t)\right).
\]
These are the singular pulson solutions discussed in \cite{FrHo01} and elsewhere.
It also follows, applying a time reversal, that the EPDiff-left equations \eqref{E:EPDiffL}
pull back to time-reversed canonical Hamiltonian equations in $Q$ and $P$, with respect to the
same Hamiltonian:
\begin{align*} 
\frac{\partial }{\partial t}\mathbf{{Q}}_a (t)
&=
- \sum_{b=1}^{N}  \mathbf{P}_b(t)\,
K(\mathbf{Q}_a(t),\mathbf{Q}_b(t))\\
\frac{\partial }{\partial t}\mathbf{P}_a (t)
&=
\sum_{b=1}^{N} 
\big(\mathbf{P}_a(t) \cdot \mathbf{P}_b(t)\big)
\, \frac{\partial }{\partial \mathbf{Q}_a}
K(\mathbf{Q}_a(t),\mathbf{Q}_b(t))\,.
\end{align*}
These are the equations of the pulson solutions of EPDiff-\textit{left}.
Note that they are nearly the same equations as for the pulson solutions of EPDiff-\textit{right}, with two 
important differences: (i) there is a time-reversal; and (ii) $Q_a(t)$ is \textit{not} the spatial location of 
particle $a$ at time $t$, but instead it is an ``anti-particle's location'' in \textit{body coordinates},
i.e. the location in body coordinates corresponding to a fixed spatial location $Q_a(0)$.
This follows from the conservation of spatial momentum.
Similar observations apply to higher-dimensional singular solutions (filaments, sheets, etc.).

%
%
%
%
%

All of the results in this section can be either verified directly, making minor changes
to the well-known proofs for right-geodesics (the flow of EPDiff-right), or deduced from the correspondence
between left and right geodesics in Section \ref{GroupAndCorrespondence}.

\section{Spatially varying metrics and non-local symmetries}\label{Kernels}
Regarding applications, a crucial point consists in defining the metric which can be viewed as a parameter to be tuned accordingly with data.
In the classical Right-LDM picture, due to translation and rotation symmetry, the class of metrics is rather small. In contrast, the Left-LDM model enables the use of many more types of kernels. In particular, kernels that incorporate non-local correlations. A striking example is the brain development where a symmetry at large scale between the left and the right parts of the brain can be exploited in order to improve the image matching quality. Of course, it is natural to ask for \emph{soft} symmetry in practical applications rather than perfect symmetry. We give hereafter an explicit example of a kernel satisfying those requirements.

Let us first present the case of perfect symmetry: Let $\Pi: V \mapsto V$ be the symmetry of interest, which is a continuous linear operator on the space of vector fields $V$ that satisfies $\Pi^2 = id$. For instance, in $\R^2$ if $v = (v_1,v_2)$, the example showed in the simulation is $\Pi((v_1,v_2))) = (u_1,u_2)$ where $u_1(x,y) = -v_1(-x,y)$ and $u_2(x,y) = v_2(-x,y)$. 
The set of vector fields $v$ satisfying the symmetry condition $\Pi(v) = v$ is thus a closed linear subspace denoted by $V_1$, which may be endowed with the induced norm or alternatively with: 
\begin{equation} \label{NewNorm}
\| v_1 \|_{V_1}^2 = \min_{v \in V} \left\{ \| v \|_V^2 \, \Big| \, \frac 12 (v + \Pi(v)) = v_1\right\}\,. 
\end{equation}
In general, those two norms do not coincide, unless $\Pi$ is self-adjoint which is the case in our example. We prefer the metric \eqref{NewNorm} since the kernel associated with that metric is given by:
\begin{equation}
K_{V_1} = \frac 14 (Id + \Pi)\circ  K \circ (Id + \Pi^*)\,.
\end{equation}
Since, in our example, $\Pi$ is self-adjoint, we can simplify the expression of $K_{V_1}$ to get
\begin{equation}
K_{V_1} = \frac 12 (Id + \Pi)\circ  K\,.
\end{equation}

The above kernel $K_{V_1}$ will produce perfect symmetry which is not desired as mentioned above. 
However, we can modify it to allow for a variable degree of symmetry. 
For example, consider the class of kernels
\begin{equation}
K = (Id + c\Pi)\circ  K_\sigma\,,
\end{equation}
where the strength of the symmetry
ranges from none at $c=0$ to perfect symmetry at $c=1$.
It is also natural to introduce a mixture of kernels with different length scales,
to account for local discrepancies in the deformation field, \textit{i.e.} which means using
\begin{equation} \label{MixedKernel}
K = (Id + c\Pi)\circ  K_{\sigma_1} +  K_{\sigma_2}\,,
\end{equation}
where $\sigma_1,\sigma_2$ are the scale parameters of the kernels, for example the standard deviation of the Gaussian kernel. In particular, it is natural to use $\sigma_1 > \sigma_2$ to account for large scale symmetry.
Looking at the form of the kernel \eqref{MixedKernel}, it is tempting to introduce a spatially varying coefficient that accounts for more or less symmetry or importance of a given kernel.
Therefore, the final example of spatially-varying kernel is the following: Let $K_i$ be $n$ kernels and $\chi_i:\Omega \mapsto [0,1]$ be $n$ smooth functions such that $\sum_{i=1}^n \chi_i = 1$ then we consider
 \begin{equation} \label{MixedKernelchi}
K = \sum_{i=1}^n \chi_i K_i \chi_i\,.
\end{equation}
This kernel is associated to the following variational interpretation:
\begin{equation} \label{NewNorm2}
\| v \|^2 = \min_{(v_1,\ldots,v_n) \in V_1 \times \ldots \times V_n} \left\{ \sum_{i=1}^n \| v_i \|_{V_i}^2 \, \Big| \, \sum_{i=1}^n \chi_i v_i= v\right\}\,. 
\end{equation}
We note that Formula \eqref{NewNorm2} is a simple generalization of mixtures of kernels,
which are explained in detail in \cite{MixtureMMS}.

%

\section{Experiments}\label{sec:Results}
In the following experiments, we are interested in deformations generated by the Left-LDM model using spatially dependent kernels that incorporate the soft symmetry constraint proposed in Section \ref{Kernels}. By the equivalence proven in Section \ref{GroupAndCorrespondence}, the final deformation is also given by the corresponding Right-LDM model, so all of the numerical results presented below have been computed using the standard gradient descent optimization method for the Right-LDM model detailed in \cite{MixtureMMS}.


We registered two images out of the LPBA40 dataset  \cite{ShattuckNI2007}. We considered Subjects 8 and 9 of the dataset. The images were resampled to a resolution of 1mm and rigidly aligned. We then extracted corresponding 2D slices from the two aligned images. Finally, we simulated a large lesion in the slice from Subject 8. 
A mask was also constructed, by dilating 
the original lesion location mask 8 times, each time using
a 3$\times$3 structuring element. 
This mask was used to omit lesioned areas from calculation of the
image dissimilarity term, and also
to mask the updated momenta before smoothing.
Registered images are shown in Fig.~\ref{fig:RegisteredImages}.

\begin{figure}[htb!]
\begin{center}
\begin{tabular}{ccc}
\includegraphics[width=0.25\linewidth]{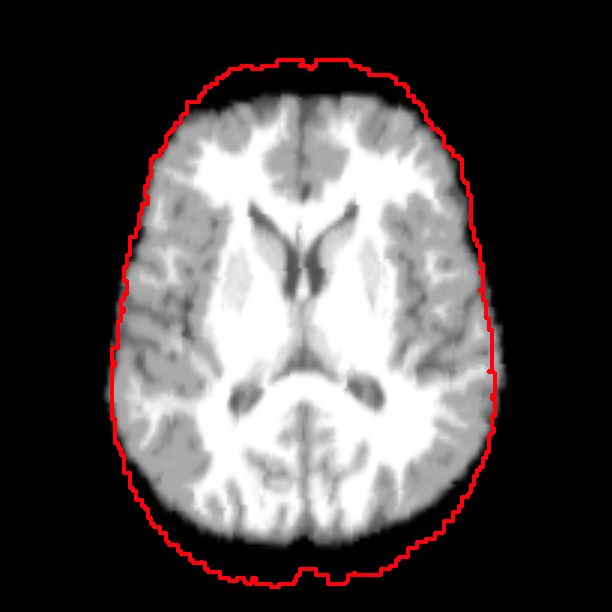}
&
\includegraphics[width=0.25\linewidth]{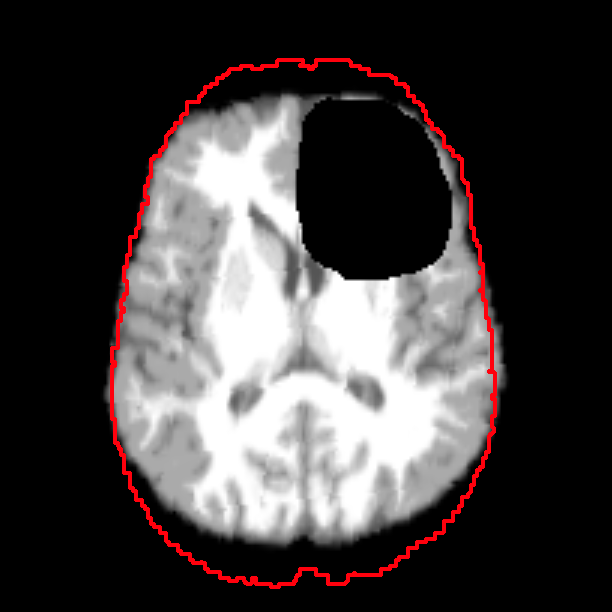}
&
\includegraphics[width= 0.25\linewidth]{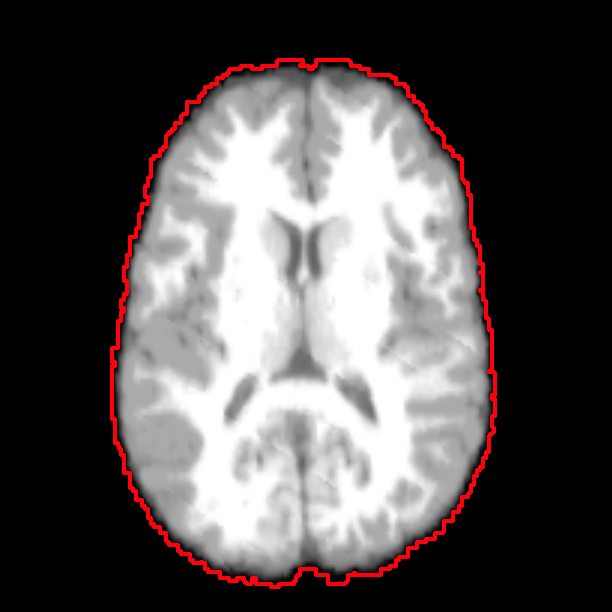}
\\
\end{tabular}
\caption{
{\bf (From left to right)}  2D slice from Subject 8 of the LPBA40 dataset; same slice with a simulated lesion (source image); 
and corresponding 2D slice from Subject 9  (target image). 
The red isoline represents the surface of Subject 9's brain.
}
\label{fig:RegisteredImages}
\end{center}
\end{figure}

\begin{figure}[htb!]
\begin{center}
\begin{tabular}{ccc}
\includegraphics[width=0.25\linewidth]{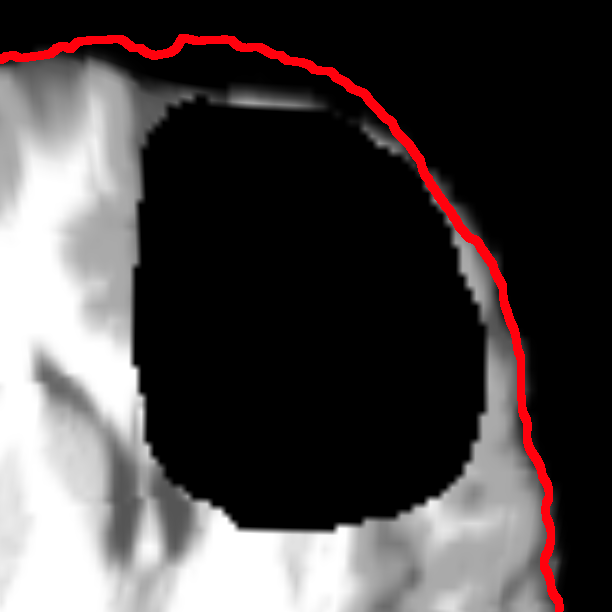}    
\begin{picture}(0,0)(0,0)
\put(-64,78){\textcolor{blue}{\Large $\times$}} 
\end{picture}
&
\includegraphics[width=0.25\linewidth]{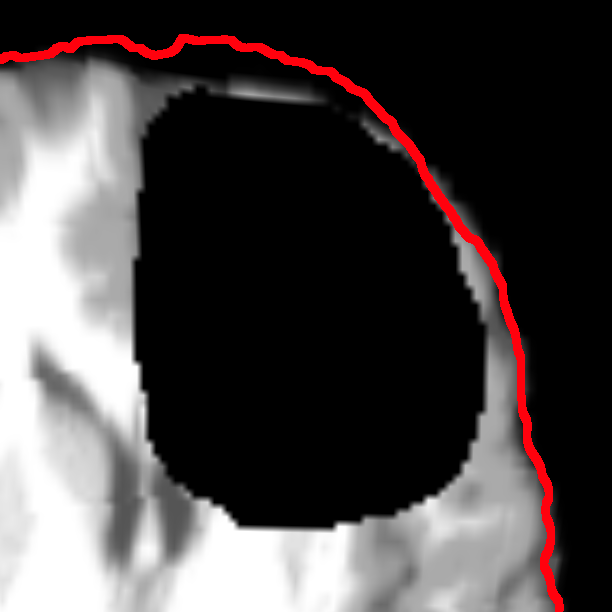}    
\begin{picture}(0,0)(0,0)
\put(-64,78){\textcolor{blue}{\Large $\times$}} 
\end{picture}
&
\includegraphics[width=0.25\linewidth]{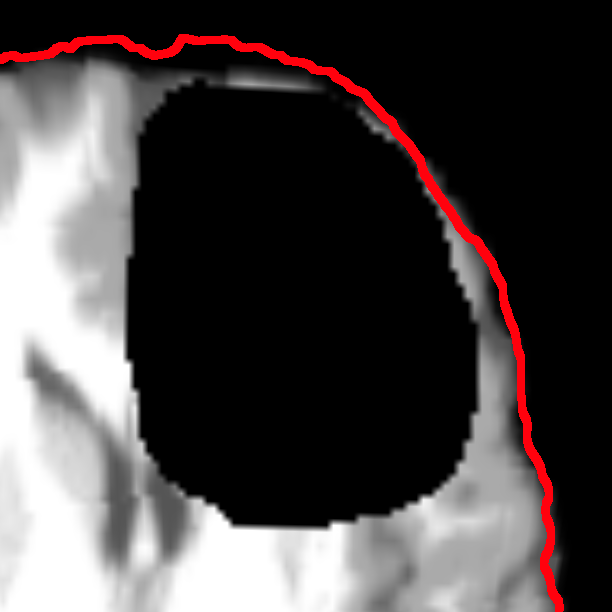}
\begin{picture}(0,0)(0,0)
\put(-64,78){\textcolor{blue}{\Large $\times$}} 
\end{picture}\\
\end{tabular}
\caption{
Source images around the simulated lesion deformed using the registration strategies 1 and 2 of section \ref{sec:Results}.
From left to right, the registration strategies were: non-symmetric kernel (strategy 1);
symmetric kernel (strategy 2) with symmetry weighting factor $c=0.1$;
and symmetric kernel (strategy 2) with symmetry weighting factor $c=1$ (pure symmetry at large scale). 
The red isoline (surface of the target) and the blue cross are always at the same location, to visualize the influence of the symmetric kernel.}
\label{fig:Results}
\end{center}
\end{figure}

We registered the lesioned images with LDM as described above,
using two kinds of kernel: a standard translationally-invariant sum of Gaussian kernels (non-symmetric);
and a spatially-varying kernel that softly enforces a left-right symmetry:
\begin{enumerate}
\item (non-symmetric)
the sum of two Gaussian kernels
$K_{\sigma_1} + K_{\sigma_2}$, where $\sigma_1 =25\mathrm{mm}$ and $\sigma_2 = 7\mathrm{mm}$,
as in \cite{Begetal2005,Risser11TMI} .
\item
 (symmetric) the sum of a large-scale symmetrised kernel with a small-scale Gaussian kernel,
$K_{\sigma_1} + c \Pi K_{\sigma_1} +  K_{\sigma_2}$,
where $\Pi$ is a reflection about the vertical line dividing the two hemispheres.
The values of $\sigma_1$ and $\sigma_2$ are the same as above, and
$c$ takes values $0.1$ (weak symmetry), $0.5$ or $1.0$ (pure symmetry at large scale).
\end{enumerate}
For comparison, we have also performed LDM registration of the \textit{unlesioned} images using
kernel (1) without a mask.

Deformed images are shown in Fig.~\ref{fig:Results} and deformation magnitudes in the $x$ direction
(horizontal) are shown in  Fig.~\ref{fig:Results2}.
We can see in Fig.~\ref{fig:Results} that modeling a symmetry in the left and right sides of the brain allows 
partial compensation for the information missing in the lesion. The deformations estimated in the lesion are indeed almost only due to the symmetry as clearly emphasized in Fig.~\ref{fig:Results2}. It is also interesting to remark that the most similar deformation to the one obtained without the lesion (image (a) in Fig.~\ref{fig:Results2}) is not the one obtained using pure symmetry on the large scale (image (e) in Fig.~\ref{fig:Results2}), but the one obtained using a factor 0.5 on the symmetry (image (d) in Fig.~\ref{fig:Results2}). In this case, the symmetry plausibly compensates for the missing information at a large scale in the lesion but does not penalize too much the estimation of the deformations in the region symmetric to the lesion.

\begin{figure}[htb!]
\begin{center}
\begin{tabular}{ccc}
\includegraphics[height=3.5cm]{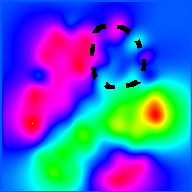}    
\begin{picture}(0,0)(0,0)
\put(-99,5){\bf (a)} 
\end{picture}
&
\includegraphics[height=3.5cm]{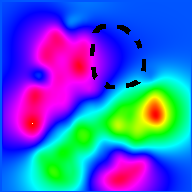}    
\begin{picture}(0,0)(0,0)
\put(-99,5){\bf  (b)} 
\end{picture}
&
\includegraphics[height=3.5cm]{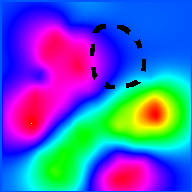}
\begin{picture}(0,0)(0,0)
\put(-99,5){\bf  (c)} 
\end{picture}
\\
\includegraphics[height=3.5cm]{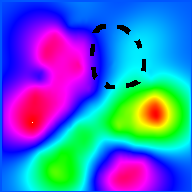}
\begin{picture}(0,0)(0,0)
\put(-99,5){\bf  (d)} 
\end{picture}
&
\includegraphics[height=3.5cm]{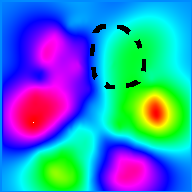}
\begin{picture}(0,0)(0,0)
\put(-99,5){\bf  (e)} 
\end{picture}
&
\includegraphics[height=3.5cm]{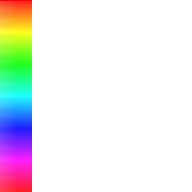}
\begin{picture}(0,0)(0,0)
\put(-75,89){\large \bf $5$ mm} 
\put(-83,5){\large \bf $-3$ mm} 
\end{picture}
\\
\end{tabular}
\caption{
Deformation magnitude in the $x$ direction (horizontal) estimated using the different registration strategies in Section \ref{sec:Results}. 
Results were obtained by registering the images without (a) and with (b-e) the lesion. A mixture of Gaussian kernels was used in (a-b). In (c),(d),(e) a similar mixture of kernels was used, but with a symmetry at the large scale weighted by the factors 0.1, 0.5  and 1, respectively. 
The dashed curve represents the boundary of the simulated lesion.
}
\label{fig:Results2}
\end{center}
\end{figure}

\section{Discussion}
We have introduced a new perspective on diffeomorphic image matching,
based on left- (rather than right-) invariant metrics.
For inexact matching with Left-LDM, the optimal diffeomorphism $\phi(1)$ is the same as for
Right-LDM (i.e., the usual LDDMM), however
there are two different optimal paths from the identity to $\phi(1)$ in the diffeomorphism group: one left- and one right- geodesic. This difference could become significant if a 
time-dependent similarity measure were used.

In the Left-LDM setting, it is clear that spatially-varying and nonisotropic kernels describe 
variable deformability properties of the source image.
We have shown, in a numerical experiment, the value of spatially-varying kernels as problem-specific regularisation
terms in inexact matching.
In particular, in a model of a lesioned brain image, we found that a kernel including a large-scale soft symmetry constraint was
successful in compensating for missing information in the lesion area. 

Through the relationship between Left- and Right- LDM, it also becomes apparent that spatially-varying and directionally-dependent kernels in Right-LDM have an interpretation in terms of local 
deformability properties of the source image,
which has not been remarked upon in the literature.

One very promising avenue for further work is to replace ad-hoc regularisation choices with 
automatically learnt ones, as has been done by
Simpson et al. \cite{SimpsonNI2012} for global regularisation parameters.
Similar methods could be developed for spatially-varying and directionally-dependent regularisation, 
based on a generative Left-LDM model.
Given a template image $I$, the LDM functional \eqref{LDDMM} can be interpreted as a log probability density function on pairs of initial vector fields $v(0)$ and images $J$:
 \begin{align}\label{jointP}
 \log P\left(v(0),J | I, \lambda,\mathbf{\sigma}\right) 
 &=\log P(v(0) | \mathbf{\sigma}) + \log P(J | v(0), I, \lambda) \\
 &=  \frac 12 \int_0^1 \|v(t)\|_{V_\mathbf{\sigma}}^2 \, dt + \frac{\lambda}{2} \| \phi(1) \cdot I - J\|^2_{L^2},\notag
 \end{align}
 with the constraint \eqref{convel} determining $v(t)$ and $\phi$ from $v(0)$.
 This could in theory be marginalised over $v$ to get $P(J)$.
 Both the regularisation parameters $\sigma$ 
 and the noise parameters $\lambda$ could be spatially-varying, possibly expressed in terms of labels
 associated with the template.
A variety of more or less standard methods could be used to optimise the parameters
 for a population of targets, including Bayesian methods related to those in
  \cite{aat07,cotterBayes}.

\bibliographystyle{spmpsci}      
\bibliography{leftmetrics}    

\begin{thebibliography}{10}
\providecommand{\url}[1]{{#1}}
\providecommand{\urlprefix}{URL }
\expandafter\ifx\csname urlstyle\endcsname\relax
  \providecommand{\doi}[1]{DOI~\discretionary{}{}{}#1}\else
  \providecommand{\doi}{DOI~\discretionary{}{}{}\begingroup
  \urlstyle{rm}\Url}\fi

\bibitem{aat07}
Allassonni{\`e}re, S., Amit, Y., Trouv{\'e}, A.: Towards a coherent statistical
  framework for dense deformable template estimation.
\newblock J. R. Statist. Soc. B \textbf{69}(1), 3--29 (2007)

\bibitem{Arsigny06}
Arsigny, V., Commowick, O., Pennec, X., Ayache, N.: A {Log-Euclidean} framework
  for statistics on diffeomorphisms.
\newblock In: R.~Larsen, M.~Nielsen, J.~Sporring (eds.) Proc. MICCAIÕ06, vol.
  4190, pp. 924--931. Springer-Verlag (2006)

\bibitem{Ashburner07}
Ashburner, J.: A fast diffeomorphic image registration algorithm.
\newblock NeuroImage \textbf{38}, 95--113 (2007)

\bibitem{Begetal2005}
Beg, M.F., Miller, M.I., Trouv\'{e}, A., Younes, L.: Computing large
  deformation metric mappings via geodesic flows of diffeomorphisms.
\newblock Int. J. Comput. Vision \textbf{61}(2), 139--157 (2005)

\bibitem{MomentumImagesBruveris}
Bruveris, M., Gay-Balmaz, F., Holm, D., Ratiu, T.: The momentum map
  representation of images.
\newblock Journal of Nonlinear Science \textbf{21}(1), 115--150 (2011)

\bibitem{MixtureMMS}
Bruveris, M., Risser, L., Vialard, F.: Mixture of kernels and iterated
  semidirect product of diffeomorphisms groups.
\newblock Multiscale Modeling and Simulation \textbf{10}(4), 1344--1368 (2012)

\bibitem{ParticleMesh}
Cotter, C.J.: The variational particle-mesh method for matching curves.
\newblock Journal of Physics A: Mathematical and Theoretical \textbf{41}(34),
  344,003 (2008)

\bibitem{cotterBayes}
Cotter, C.J., Cotter, S.L., Vialard, F.X.: Bayesian data assimilation in shape
  registration.
\newblock Inverse Problems \textbf{29}(4), 045,011 (2013)

\bibitem{CotterClebsch}
{Cotter}, C.J., {Holm}, D.D.: Continuous and discrete {C}lebsch variational
  principles.
\newblock Foundations of Computational Mathematics \textbf{9}, 221--242 (2009)

\bibitem{DuGrMi1998}
Dupuis, P., Grenander, U., Miller, M.I.: Variational problems on flows of
  diffeomorphisms for image matching.
\newblock Quart. Appl. Math. \textbf{56}, 587--600 (1998)

\bibitem{em70}
Ebin, D.G., Marsden, J.E.: Groups of diffeomorphisms and the motion of an
  incompressible fluid.
\newblock Ann. of Math \textbf{92}, 102--163 (1970)

\bibitem{FischerMarsden}
Fischer, A.E., Marsden, J.E.: The {E}instein evolution equations as a
  first-order quasi-linear symmetric system, {I}.
\newblock Communications in Mathematical Physics \textbf{28}, 1--38 (1972)

\bibitem{FrHo01}
Fringer, O., Holm, D.: Integrable vs nonintegrable geodesic soliton behavior.
\newblock Physica D \textbf{150}, 237--263 (2001)

\bibitem{Gay-Balmaz11optim}
{Gay-Balmaz}, F., {Holm}, D.D., {Ratiu}, T.S.: {Geometric dynamics of
  optimization}.
\newblock Comm. in Math. Sciences \textbf{11}(1), 163--231 (2013)

\bibitem{GBMaRa2012}
Gay-Balmaz, F., Marsden, J.E., Ratiu, T.S.: Reduced variational formulations in
  free boundary continuum mechanics.
\newblock J Nonlinear Sci \textbf{22}, 463--497 (2012)

\bibitem{GaRa11Clebsch}
Gay-Balmaz, F., Ratiu, T.S.: Clebsch optimal control formulation in mechanics.
\newblock Journal of Geometric Mechanics \textbf{3}, 47--79 (2011)

\bibitem{HoMa04}
Holm, D.D., Marsden, J.E.: Momentum maps and measure-valued solutions (peakons,
  filaments and sheets) for the {EPDiff} equation.
\newblock Progr. in Math. \textbf{232}, 203--235 (2004)

\bibitem{HoMaRa1986}
Holm, D.D., Marsden, J.E., Ratiu, T.: The {H}amiltonian structure of continuum
  mechanics in material, inverse material, spatial, and convective
  representations.
\newblock In: Hamiltonian Structure and Lyapunov Stability for Ideal Continuum
  Dynamics, vol. 100, pp. 11--124. Presses Univ. Montr\'eal, Montr\'eal (1986)

\bibitem{HoSchSt09}
Holm, D.D., Schmah, T., Stoica, C.: Geometric Mechanics and Symmetry: From
  Finite to Infinite Dimensions.
\newblock Oxford: Clarendon Press (2009)

\bibitem{HolmSolitons}
{Holm}, D.D., {Tilak Ratnanather}, J., {Trouv{\'e}}, A., {Younes}, L.: {Soliton
  Dynamics in Computational Anatomy}.
\newblock eprint arXiv:nlin/0411014  (2004)

\bibitem{Holm08theeuler}
Holm, D.D., Trouv{\'e}, A., Younes, L.: {The Euler Poincar{\'e} theory of
  metamorphosis}.
\newblock Quart. Appl. Math  (2008)

\bibitem{ImplicitSpringerEOM}
Kudryavtsev, L.: Implicit function.
\newblock Encyclopedia of Mathematics  (2011).
\newblock \urlprefix\url{http://www.encyclopediaofmath.org}

\bibitem{Miller09_CFA}
Miller, M., Qiu, A.: The emerging discipline of computational functional
  anatomy.
\newblock NeuroImage \textbf{45}, S16--S39 (2009)

\bibitem{MR0431262}
Omori, H.: Infinite dimensional {L}ie tranformation groups, \emph{Lecture Notes
  in Mathematics}, vol. 427.
\newblock Springer-Verlag, Berlin (1974)

\bibitem{PcwDiffRisser}
Risser, L., Vialard, F.X., Baluwala, H.Y., Schnabel, J.A.:
  Piecewise-diffeomorphic image registration: Application to the motion
  estimation between {3D CT} lung images with sliding conditions.
\newblock Medical Image Analysis \textbf{17}, 182--193 (2013)

\bibitem{RisserMICCAI}
Risser, L., Vialard, F.X., Wolz, R., Holm, D.D., Rueckert, D.: Simultaneous
  fine and coarse diffeomorphic registration: Application to the atrophy
  measurement in alzheimer's disease.
\newblock In: MICCAI 2010, \emph{Lecture Notes in Computer Science}, vol. 6362,
  pp. 610--617. Springer, Berlin (2010)

\bibitem{Risser11TMI}
Risser, L., Vialard, F.X., Wolz, R., Murgasova, M., Holm, D.D., Rueckert, D.:
  {Simultaneous Multi-scale Registration Using Large Deformation Diffeomorphic
  Metric Mapping}.
\newblock IEEE Transactions on Medical Imaging \textbf{30}(10), 1746--1759
  (2011)

\bibitem{ShattuckNI2007}
Shattuck, D.W., Mirza, M., Adisetiyo, V., Hojatkashani, C., Salamon, G., Narr,
  K.L., Poldrack, R.A., Bilder, R.M., Toga, A.W.: Construction of a {3D}
  probabilistic atlas of human cortical structures.
\newblock NeuroImage \textbf{39}, 1064--80 (2008)

\bibitem{SiMaKr1988}
Simo, J.C., Marsden, J.E., Krishnaprasad, P.S.: The {H}amiltonian structure of
  nonlinear elasticity: the material, spatial and convective representations of
  solids, rods and plates.
\newblock Arch. Ration. Mech. Anal. \textbf{104}, 125--183 (1988)

\bibitem{SimpsonNI2012}
Simpson, I., Schnabel, J., Groves, A., Andersson, J., Woolrich, M.:
  Probabilistic inference of regularisation in non-rigid registration.
\newblock NeuroImage \textbf{59}(3), 2438--51 (2012)

\bibitem{Trouve1998}
Trouv\'{e}, A.: Diffeomorphic groups and pattern matching in image analysis.
\newblock Int. J. Comput. Vision \textbf{28}, 213--221 (1998)

\bibitem{TrouvŽ05metamorphosesthrough}
Trouv\'{e}, A., Younes, L.: Metamorphoses through lie group action.
\newblock Foundations of Computational Mathematics \textbf{5}, 173--198 (2005)

\bibitem{Vercauteren09}
Vercauteren, T., Pennec, X., Perchant, A., Ayache, N.: Diffeomorphic demons:
  efficient non-parametric image registration.
\newblock Neuroimage \textbf{45}(1 Suppl), S61--S72 (2009)

\bibitem{GeodesicShootingVialard}
Vialard, F.X., Risser, L., Rueckert, D., Cotter, C.J.: Diffeomorphic 3{D} image
  registration via geodesic shooting using an efficient adjoint calculation.
\newblock International Journal of Computer Vision \textbf{97}(2), 229--241
  (2012)

\bibitem{Yosida}
Yosida, K.: Functional Analysis, \emph{Die Grundlehren der mathematischen
  Wissenschaften in Einzeldarstellungen}, vol. 123.
\newblock Springer-Verlag, Berlin, New York (1965)

\bibitem{laurentbook}
Younes, L.: Shapes and Diffeomorphisms.
\newblock Springer (2010)

\end{thebibliography}
\end{document}